\newtheorem{theorem}{Theorem}[section]
\newtheorem{proposition}[theorem]{Proposition}
\newtheorem{definition}[theorem]{Definition}
\newtheorem{lemma}[theorem]{Lemma}
\newtheorem{example}[theorem]{Example}
\theoremstyle{remark}
\newtheorem{remark}[theorem]{Remark}
\begin{document}

\title{\bf{Complete stationary surfaces in $\mathbb{R}^4_1$ with total Gaussian curvature $-\int K\mathrm{d}M=6\pi$}}

\author{Xiang Ma
\thanks
{LMAM, School of Mathematical Sciences, Peking University,
100871 Beijing, People's Republic of China.
Fax: +86-10-62751801, Email: maxiang@math.pku.edu.cn \hspace{2mm} Funded by the Project 10901006 of
National Natural Science Foundation of China. The author thanks Sheng Bai for pointing out a mistake in the original proof to Lemma 4.1.}
}

\maketitle

\begin{center}
{\bf Abstract}
\end{center}

In a previous paper we classified complete stationary surfaces
(i.e. spacelike surfaces with zero mean curvature)
in 4-dimensional Lorentz space $\mathbb{R}^4_1$
which are algebraic and with total Gaussian curvature
$-\int K\mathrm{d}M=4\pi$. Here we go on with the study of
such surfaces with $-\int K\mathrm{d}M=6\pi$.
It is shown in this paper that the topological type of such a surface must be a M\"obius strip. On the other hand, new examples
with a single good singular end are shown to exist.

{\bf Keywords:}  stationary surface, least total curvature,
singular end, non-orientable surfaces, M\"obius strip\\

{\bf MSC(2000):\hspace{2mm} 53A10, 53C42, 53C45}

\section{Introduction}

In a previous paper \cite{Ma} we have generalized
the classical theory of minimal surfaces in $\mathbb{R}^3$ to
zero mean curvature spacelike surfaces in 4-dimensional Lorentz
space. Such an immersed surface ${\bf x}:M\to\mathbb{R}^4_1$
is called a \emph{stationary surface}.

Based on this general theory, in \cite{MaWang}
we classified those algebraic ones with least possible
total (Gaussian) curvature $-\int K\mathrm{d}M=4\pi$. This extends a classical result of Osserman that a complete minimal surface in
$\mathbb{R}^3$ with total curvature $4\pi$ is either the catenoid or the Enneper surface \cite{Osser}.

Another work in \cite{MaWang} is that we generalized the theory
of non-orientable minimal surfaces in $\mathbb{R}^3$
\cite{Meeks, Oliveira, Lopez, Martin} to our setting.
This is done by the same idea, namely, lifting everything
to the oriented double covering surface $\widetilde{M}$.

Compared with minimal surfaces in $\mathbb{R}^3$,
a major difference is that stationary surfaces in $\mathbb{R}^4_1$
with finite total curvature may fail to be algebraic \cite{Ma}.
By this term \emph{algebraic}, we mean that the Weierstrass data, or equivalently,
the vector-valued holomorphic differential ${\bf x}_z\mathrm{d}z$,
extend to meromorphic functions or forms defined on a compact Riemann surface.
Nevertheless, finite total curvature still implies
finite topology. So in this paper we assume that the double covering
$\widetilde{M}$ has genus $g$, and the open surface $M$ is
homomorphic to a closed non-orientable surface (with Euler
characteristic $1-g$) punctured at $r$ points (the ends).

By Gauss-Bonnet type formulas established in \cite{Ma},
we know that complete algebraic stationary surfaces with
$-\int K\mathrm{d}M=6\pi$ must be non-orientable.
On the other hand, we obtained a lower bound in \cite{MaWang} for any $g\ge 0$ as below:
\begin{equation}\label{eq-bound}
-\int_M K\mathrm{d}M\ge 2\pi(g+3).
\end{equation}
Thus it is natural to study complete,
algebraic, non-orientable stationary surfaces with total curvature $6\pi$, which is the least possible value.

By \eqref{eq-bound} and the generalized Jorge-Meeks formula (see Theorem ~\ref{GB2}), we know such surface $M$ must be
homomorphic to a projective plane with one or two ends.
Meeks have obtained

\bigskip
{\bf Known results in $\mathbb{R}^3$ \cite{Meeks}:} There is a unique
complete, immersed minimal surface with total curvature
$-\int K\mathrm{d}M=6\pi$, which is now called Meeks' M\"obius strip. In particular, the topological type of a projective plane with two punctures does not occur.
\bigskip

New examples appear in $\mathbb{R}^4_1$. We have constructed complete stationary M\"obius strips with total curvature $6\pi$ in \cite{MaWang}. Among them, one family is a generalization of Meeks and Olivaira's
examples; another family has an essential singularity at
the end. We will review the descriptions of them in Section~3.

Besides that, we are still interested to know other
possible examples in $\mathbb{R}^4_1$.
Can the topological type of a projective plane with
two punctures be realized as a stationary surface
in $\mathbb{R}^4_1$? On the other hand,
besides Meeks' example and its deformations as mentioned above,
does there exist new type of stationary M\"obius strips?
These two questions are answered at here.

\bigskip
{\bf Conclusion 1:} There exists a family of
complete, immersed stationary M\"obius strips in $\mathbb{R}^4_1$
with a good singular end and total curvature
$-\int_M K\mathrm{d}M= 6\pi.$
\bigskip

 Recall that an end is called
\emph{singular end} if the two Gauss maps satisfy $\phi=\bar\psi$.
Please see Definition~\ref{def-singular} in Section~2, or \cite{Ma} for detailed explanation.
This is a special phenomenon which never occur for
minimal surfaces in $\mathbb{R}^3$.
In particular, the total curvature integral converges around such an end if, and only if, these two functions take the same value with
different multiplicities. This case we call it a good singular end.

\bigskip
{\bf Conclusion 2:} There does not exist
examples with $-\int K\mathrm{d}M=6\pi$ and
homomorphic to the projective plane punctured at two points.
\bigskip

A noteworthy observation (Lemma~\ref{lem-flux}) is that the flux at any end of a
stationary M\"obius strip (with several punctures) must vanish.
It is interesting to know whether this is true for any other
non-orientable stationary surface.

This paper is organized as follows. In Section~2 we review the
Weierstrass representation and Gauss-Bonnet type formulas.
The examples with a good singular end are shown to exist in
Section~3, together with a discussion of other possible
examples. The non-existence of topological type of
the projective plane punctured at two points is contained in
Section~3, where we rule out it case by case.

\section{Preliminary}
Let ${\bf x}:M^2\to \mathbb{R}^4_1$ be an oriented complete
spacelike surface in 4-dimensional Lorentz space.
The Lorentz inner product $\langle\cdot,\cdot\rangle$ is given by
\[\langle {\bf x},{\bf x}\rangle=x_1^2+x_2^2+x_3^2-x_4^2.\]
We will briefly review the basic facts and global results
established in \cite{Ma} about such surfaces
with zero mean curvature (called \emph{stationary surfaces}).

Let $\mathrm{d}s^2=\mathrm{e}^{2\omega}|\mathrm{d}z|^2$
be the induced Riemannian metric on $M$ with respect to a
local complex coordinate $z=u+\mathrm{i}v$. Hence
\[
\langle {\bf x}_{z},{\bf x}_{z}\rangle=0,~~ \langle
{\bf x}_{z},{\bf x}_{\bar{z}}\rangle =\frac{1}{2}\mathrm{e}^{2\omega}.
\]
The Weierstrass-type
representation of stationary surface ${\bf x} : M\rightarrow \mathbb{R}^4_1$ is given by \cite{Ma}:
\begin{equation}\label{x}
{\bf x}= 2~\mathrm{Re}\int \Big(\phi+\psi, -\mathrm{i}(\phi-\psi),1-\phi\psi,1+\phi\psi\Big)\mathrm{d}h
\end{equation}
in terms of two meromorphic functions $\phi,\psi$ and
a holomorphic $1$-form $\mathrm{d}h$ locally.
 We call $\phi,\psi$ the Gauss maps of ${\bf x}$ and
$\mathrm{d}h$ the height differential as in $\mathbb{R}^3$.
Indeed, $\{\phi,\bar{\psi}\}$ correspond to
the two lightlike normal directions at each point of ${\bf x}(M)$.

\begin{remark}\label{rem-Wrepre}
When $\phi\equiv \mp 1/\psi$, by \eqref{x} we recover the Weierstrass representation formula for
a minimal surface in $\mathbb{R}^3$, or a maximal surface in
$\mathbb{R}^3_1$. When $\phi$ or $\psi$ is constant,
we get a zero mean curvature spacelike surface in the 3-space
$\mathbb{R}^3_0\triangleq \{(x_1,x_2,x_3,x_3)\in\mathbb{R}^4_1\}$
with an induced degenerate inner product, which is essentially
the graph of a harmonic function $x_3=f(x_1,x_2)$ on
complex plane $\mathbb{C}=\{x_1+\mathrm{i}x_2\}$.
From now on we always assume that neither of $\phi,\psi$
is a constant unless it is stated otherwise,
because in this degenerate case the induced metric is always flat and the total curvature is $0$.
\end{remark}

\begin{remark}\label{rem-trans}
A Lorentz orthogonal transformations of $\mathbb{R}^4_1$
induce a M\"obius transformation on $S^2$, or equivalently,
a fractional linear transformation on $\mathbb{C}P^1=\mathbb{C}\cup\{\infty\}$
given by $A=\left(\begin{smallmatrix}a & b \\ c & d\end{smallmatrix}\right)$
 with $a,b,c,d \in \mathbb{C},~ad-bc=1$.
The Gauss maps $\phi,\psi$ and the height differential $\mathrm{d}h$ transform as below:
\begin{equation}\label{trans}
\phi\Rightarrow
\frac{a\phi+b}{c\phi+d}~,~~
 \psi\Rightarrow \frac{\bar{a}\psi+\bar{b}}{\bar{c}\psi+\bar{d}}~,~~
 \mathrm{d}h\Rightarrow (c\phi+d)(\bar{c}\psi+\bar{d})\mathrm{d}h~.
 \end{equation}
This is used to normalize the representation of examples.
\end{remark}

\begin{theorem}\label{thm-period}\cite{Ma}
Given holomorphic $1$-form $\mathrm{d}h$ and meromorphic functions $\phi,\psi:M\rightarrow \mathbb{C}\cup\{\infty\}$
globally defined on a Riemann surface $M$. Suppose they satisfy the regularity condition
1),2) and period conditions 3) as below:

1) $\phi\neq\bar{\psi}$ on $M$ and their poles do not coincide;

2) The zeros of $\mathrm{d}h$ coincide with the poles of $\phi$ or $\psi$
with the same order;

3) Along any closed path the periods satisfy
\begin{equation}\label{eq-period1}
\oint_\gamma \phi \mathrm{d}h
=-\overline{\oint_\gamma \psi \mathrm{d}h }, ~~~(\text{horizontal period condition})
\end{equation}
\begin{equation}\label{eq-period2}
\mathrm{Re}\oint_\gamma \mathrm{d}h=\mathrm{Re}\oint_\gamma \phi\psi \mathrm{d}h=0.~~~(\text{vertical period condition})
\end{equation}
Then
\eqref{x} defines a stationary surface ${\bf x}:M\rightarrow \mathbb{R}^4_1$.

Conversely, any stationary surface ${\bf x}:M\rightarrow \mathbb{R}^4_1$ can be
represented as \eqref{x} in terms of such $\phi,\ \psi$ and $\mathrm{d}h$ over a (necessarily non-compact) Riemann surface $M$.
\end{theorem}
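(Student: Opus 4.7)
\emph{Plan for sufficiency.} Assume $\phi,\psi$ and $\mathrm{d}h$ satisfy 1)--3). I will first check that the vector-valued $1$-form
\[
\boldsymbol\omega = \Big(\phi+\psi,\,-\mathrm{i}(\phi-\psi),\,1-\phi\psi,\,1+\phi\psi\Big)\mathrm{d}h
\]
is holomorphic and regular everywhere on $M$: the only candidate singularities sit at poles of $\phi$ or $\psi$, and by 1) only one of them can be singular at a given point, so by 2) the zero of $\mathrm{d}h$ kills every potential pole in $\boldsymbol\omega$. Next, to see that $\mathbf{x}=2\,\mathrm{Re}\int\boldsymbol\omega$ is single-valued I will show $\mathrm{Re}\oint_\gamma$ of each component vanishes: the first two combine to $\oint_\gamma\phi\,\mathrm{d}h - \overline{\oint_\gamma\phi\,\mathrm{d}h}\in\mathrm{i}\mathbb{R}$ via \eqref{eq-period1}, while the last two are exactly \eqref{eq-period2}. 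Finally a direct computation in signature $(+,+,+,-)$ gives
\[
\langle \mathbf{x}_z,\mathbf{x}_z\rangle=0,\qquad \langle\mathbf{x}_z,\mathbf{x}_{\bar z}\rangle = 2|h_z|^2|\phi-\bar\psi|^2,
\]
positive by 1), so $\mathbf{x}$ is a conformal spacelike immersion; harmonicity of $\mathbf{x}$ (as the real part of a holomorphic vector-valued function) combined with conformality forces $\mathbf{H}\equiv 0$ via the standard identity relating $\Delta\mathbf{x}$ to $\mathbf{H}$.

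\emph{Plan for necessity.} For the converse I will pick local isothermal coordinates $z$ on $M$; zero mean curvature in these coordinates says $\mathbf{x}$ is harmonic, so $\mathbf{x}_z\,\mathrm{d}z$ is a holomorphic $\mathbb{C}^4$-valued $1$-form, and conformality places its values in the complex null cone $\{v_1^2+v_2^2+v_3^2-v_4^2=0\}$. Parametrising this cone by
\[
\mathbf{v} = \tfrac{v_3+v_4}{2}\bigl(\phi+\psi,\,-\mathrm{i}(\phi-\psi),\,1-\phi\psi,\,1+\phi\psi\bigr),\quad \phi = \tfrac{v_1+\mathrm{i}v_2}{v_3+v_4},\ \psi = \tfrac{v_1-\mathrm{i}v_2}{v_3+v_4},
\]
yields local meromorphic $\phi,\psi$ and a local holomorphic $\mathrm{d}h = \tfrac{v_3+v_4}{2}\mathrm{d}z$. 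The period conditions 3) then translate the real-valuedness $\mathrm{Re}\oint_\gamma \mathbf{x}_z\,\mathrm{d}z = 0$ component-by-component, while 1) and 2) follow from the fact that $\mathbf{x}_z\,\mathrm{d}z$ has no poles and defines an immersion.

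\emph{Main obstacle.} The hard part is global consistency in the converse: the parametrisation above breaks down exactly where $v_3+v_4$ vanishes. The remedy is to cover $M$ by charts obtained via transformations \eqref{trans} (each Lorentz rotation moves the degenerate hyperplane to a different place, so finitely many such charts exhaust $M$) and to verify that the locally computed data are related across overlaps precisely by the rules in \eqref{trans}. This patching yields globally defined meromorphic $\phi,\psi$ and a global holomorphic $\mathrm{d}h$ on $M$ with the pole/zero pattern prescribed in 1) and 2).
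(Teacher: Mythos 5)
The paper does not prove Theorem~\ref{thm-period}; it is quoted verbatim from the reference \cite{Ma}, so there is no in-house argument to compare yours against. Judged on its own, your outline is the standard Enneper--Weierstrass argument and is essentially correct: the computation $\langle\mathbf{x}_z,\mathbf{x}_z\rangle=0$, $\langle\mathbf{x}_z,\mathbf{x}_{\bar z}\rangle=2|h_z|^2|\phi-\bar\psi|^2$ is right, conditions 1)--2) do exactly cancel the poles and keep this quantity positive, the period conditions 3) are precisely the real-period vanishing of the four components, and harmonicity plus conformality gives $\mathbf{H}\equiv 0$. Two remarks on the converse. First, your ``main obstacle'' is milder than you make it: $\phi=(v_1+\mathrm{i}v_2)/(v_3+v_4)$, $\psi=(v_1-\mathrm{i}v_2)/(v_3+v_4)$ and $\mathrm{d}h=\tfrac12\bigl((x_3)_z+(x_4)_z\bigr)\mathrm{d}z$ are built from the components of the single globally defined holomorphic form $\mathbf{x}_z\,\mathrm{d}z$, so they are automatically global meromorphic objects with no chart-by-chart patching; the only genuine failure is $v_3+v_4\equiv 0$ on all of $M$, which is cured by one global Lorentz rotation and corresponds to the degenerate surfaces in $\mathbb{R}^3_0$ (constant $\phi$ or $\psi$) that Remark~\ref{rem-Wrepre} sets aside. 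Second, you should spell out that in the converse direction conditions 1) and 2) are forced by the positivity of $2|h_z|^2|\phi-\bar\psi|^2$: coincident poles of $\phi,\psi$, or a zero of $\mathrm{d}h$ of the wrong order, would make this conformal factor vanish or blow up, contradicting the spacelike immersion hypothesis. With those two points made explicit the proof is complete.
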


The structure equations and the integrability conditions are given
in \cite{Ma}. An important formula
gives the total Gaussian and normal curvature:
\begin{equation}\label{eq-totalcurvature}
\int_M(-K+\mathrm{i}K^{\perp})\mathrm{d}M=
2\mathrm{i}\int_M
\frac{\phi_z\bar{\psi}_{\bar{z}}}{(\phi-\bar{\psi})^2}
\mathrm{d}z\wedge \mathrm{d}\bar{z}.
\end{equation}
At one end $p$ with $\phi=\bar\psi$, the integral of
total curvature above will become an improper integral.
A crucial observation in \cite{Ma} is that this improper
 integral converges absolutely only for a special class of
such ends.

\begin{definition}\label{def-singular}
Suppose ${\bf x}:D-\{0\}\rightarrow \mathbb{R}^4_1$
is an annular end of a regular stationary surface (with boundary)
whose Gauss maps $\phi$ and $\psi$
extend to meromorphic functions on the unit disk $D\subset\mathbb{C}$.
It is called a \emph{regular end} when
\[
\phi(0)\ne\bar\psi(0).~~~(\text{Thus}~ \phi(z)\ne\bar\psi(z),~\forall~z\in D.)
\]
It is a \emph{singular end} if $\phi(0)=\bar\psi(0)$
where the value could be finite or $\infty$.

When the multiplicities of $\phi$ and $\bar\psi$ at $z=0$
are equal, we call $z=0$ a \emph{bad singular end}.
Otherwise it is a \emph{good singular end}.
\end{definition}

\begin{proposition}\cite{Ma}\label{prop-goodsingular}
A singular end of a stationary surface ${\bf x}:D-\{0\}\rightarrow \mathbb{R}^4_1$
is \emph{good} if and only if the curvature integral
\eqref{eq-totalcurvature} converges absolutely around this end.
\end{proposition}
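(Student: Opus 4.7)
The plan is a local asymptotic analysis of the integrand in \eqref{eq-totalcurvature} at the puncture $z=0$. I first reduce to the case $\phi(0)=\bar\psi(0)=a\in\mathbb{C}$ finite by the M\"obius action of Remark~\ref{rem-trans}; the case $a=\infty$ reduces to this after replacing $\phi,\psi$ by $1/\phi,1/\psi$, which preserves both the multiplicities in Definition~\ref{def-singular} and the absolute convergence of \eqref{eq-totalcurvature}. With this reduction, expand
\[
\phi(z)=a+c_1 z^{m_1}+O(|z|^{m_1+1}),\qquad \bar\psi(z)=a+c_2\bar z^{m_2}+O(|z|^{m_2+1}),
\]
with $c_1,c_2\neq 0$ and $m_1,m_2\ge 1$. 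Differentiating gives $\phi_z\sim m_1c_1z^{m_1-1}$ and $\bar\psi_{\bar z}\sim m_2c_2\bar z^{m_2-1}$, so in polar coordinates $z=r\mathrm{e}^{\mathrm{i}\theta}$ one has $|\phi_z\bar\psi_{\bar z}|\sim m_1m_2|c_1c_2|\,r^{m_1+m_2-2}$.

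If $m_1\neq m_2$ (good end), the lower-degree term dominates in $\phi-\bar\psi$, giving a uniform bound $|\phi-\bar\psi|\geq Cr^{\min(m_1,m_2)}$ on a small punctured disk. Combined with the area element $r\,\mathrm{d}r\,\mathrm{d}\theta$, the absolute integrand is $O\bigl(r^{|m_1-m_2|-1}\bigr)\,\mathrm{d}r\,\mathrm{d}\theta$, which is integrable near $r=0$ because $|m_1-m_2|\ge 1$. If $m_1=m_2=m$ (bad end) with $|c_1|\neq|c_2|$, the angular factor $c_1\mathrm{e}^{\mathrm{i}m\theta}-c_2\mathrm{e}^{-\mathrm{i}m\theta}$ stays bounded below by $\bigl||c_1|-|c_2|\bigr|>0$, so $|\phi-\bar\psi|\sim Cr^m$ uniformly in $\theta$ and the absolute integrand is pointwise bounded below by a positive multiple of $\mathrm{d}r\,\mathrm{d}\theta/r$, forcing logarithmic divergence.

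The main technical obstacle is the degenerate bad case $|c_1|=|c_2|$, where the angular factor vanishes along $2m$ rays $\theta=\theta_j$ and one must rule out a spurious cancellation by the next-order terms. A direct differentiation shows the derivative of the angular factor at each zero equals $2\mathrm{i}mc_1\mathrm{e}^{\mathrm{i}m\theta_j}\neq 0$, so each zero is simple in $\theta$; combining with the next-order radial correction in the expansion of $\phi-\bar\psi$ gives an upper bound $|\phi-\bar\psi|^2\le C r^{2m}\bigl((\theta-\theta_j)^2+r^2\bigr)$ on a small box around $(\theta_j,0)$. Using $|\phi_z\bar\psi_{\bar z}|\ge C'r^{2m-2}$ (which holds throughout since the numerator has no angular zeros), the contribution of this box to the absolute integral is bounded below by a positive constant times
\[
\int_0^\varepsilon\int_{-\delta}^{\delta}\frac{\mathrm{d}\theta\,\mathrm{d}r}{r\bigl(\theta^2+r^2\bigr)},
\]
which, via the substitution $\theta=rs$, reduces to $\int_0^\varepsilon r^{-2}\,\mathrm{d}r\cdot\int_{-\delta/r}^{\delta/r}\mathrm{d}s/(s^2+1)$ and diverges. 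This rules out the last subcase, and the three regimes together establish the desired equivalence.
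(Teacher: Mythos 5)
Your argument is correct. Note that the paper itself gives no proof of this proposition --- it is quoted from the reference [Ma] --- so there is nothing in this document to compare against; but your local asymptotic analysis (expand $\phi-\bar\psi=c_1z^{m_1}-c_2\bar z^{m_2}+\cdots$ in polar coordinates, split into the cases $m_1\ne m_2$, $m_1=m_2$ with $|c_1|\ne|c_2|$, and the degenerate case $|c_1|=|c_2|$) is exactly the standard route taken in that reference, and you correctly identify and dispose of the only delicate point, namely the angular zeros of $c_1\mathrm{e}^{\mathrm{i}m\theta}-c_2\mathrm{e}^{-\mathrm{i}m\theta}$ in the degenerate bad case, where the box estimate $\int r^{-1}\bigl((\theta-\theta_j)^2+r^2\bigr)^{-1}\mathrm{d}\theta\,\mathrm{d}r=\infty$ does force divergence. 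Two harmless remarks: the M\"obius reduction is justified because the density $\phi_z\bar\psi_{\bar z}/(\phi-\bar\psi)^2$ is literally invariant under the transformation \eqref{trans} (worth saying explicitly), and the simplicity of the angular zeros is not actually needed --- only the Lipschitz upper bound $|c_1\mathrm{e}^{\mathrm{i}m\theta}-c_2\mathrm{e}^{-\mathrm{i}m\theta}|\le L|\theta-\theta_j|$ enters your divergence estimate.
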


For a good singular end we introduced the following
definition of its index.

\begin{definition}\cite{Ma}\label{lemma-index2}
Suppose $p$ is an isolated zero of $\phi-\bar\psi$ in $p$'s neighborhood $D_p$,
where holomorphic functions $\phi$ and $\psi$ take the value
$\phi(p)=\overline{\psi(p)}$ with multiplicity $m$ and $n$, respectively.
\emph{The index of $\phi-\bar{\psi}$ at $p$} (when $\phi,\psi$ are both holomorphic at $p$) is
\begin{equation}\label{eqind}
\mathrm{ind}_p(\phi-\bar{\psi})\triangleq
\frac{1}{2\pi\mathrm{i}}\oint_{\partial D_{p}}d\ln(\phi-\bar{\psi})=\left\{
   \begin{array}{ll}
          m, & \hbox{$m<n$;} \\
         -n, & \hbox{ $m>n$.}
   \end{array}
 \right.
\end{equation}
\emph{The absolute index of $\phi-\bar{\psi}$ at $p$} is
\begin{equation}\label{eqind+}
\mathrm{ind}^{+}_p(\phi-\bar{\psi})\triangleq
\left|\mathrm{ind}_p(\phi-\bar{\psi})\right|.
\end{equation}
For a regular end our index is still meaningful with
$\mathrm{ind}=\mathrm{ind}^+=0.$ For convenience we also introduce
\begin{equation}\label{eqind10}
\mathrm{ind}^{1,0}\!\triangleq \frac{1}{2}(\mathrm{ind}^{+}\!+\mathrm{ind}),~~~
\mathrm{ind}^{0,1}\!\triangleq \frac{1}{2}(\mathrm{ind}^{+}\!-\mathrm{ind}),
\end{equation}
which are always non-negative.
\end{definition}

\medskip
A stationary surface in $\mathbb{R}^4_1$ is called an
\emph{algebraic stationary surface} if there exists a
compact Riemann surface $\overline{M}$
with $M=\overline{M}\backslash\{p_1,p_2,\cdots,p_r\}$ such that
${\bf x}_z \mathrm{d}z$ is a vector valued meromorphic form defined
on $\overline{M}$. For this surface class, we have
index formulas, and the Jorge-Meeks formula \cite{Jor-Meeks} is generalized as below.

\begin{theorem}\cite{Ma}\label{GB2}
For a complete algebraic stationary surface
${\bf x}:M\rightarrow \mathbb{R}^4_1$
given by \eqref{x} in terms of $\phi,\psi,\mathrm{d}h$
without bad singular ends, the total Gaussian curvature and total normal curvature are related with the indices at the ends $p_j$ (singular or regular) by the following formulas:
\begin{align}
\int_M K\mathrm{d}M &=-4\pi \left(\deg\phi-\sum{_j} \mathrm{ind}^{1,0}(\phi-\bar{\psi})\right) \label{eq-deg1}\\
&=-4\pi \left(\deg\psi-\sum{_j} \mathrm{ind}^{0,1}(\phi-\bar{\psi})\right),\label{eq-deg2}\\
&=2\pi\left(2-2g-r-\sum_{j=1}^r \widetilde{d}_j\right),
\end{align}
where $g$ is the genus of the compact Riemann surface $\overline{M}$, $r$ is the number of ends, $M\cong \overline{M}\backslash\{p_1,\cdots,p_r\}$,
 and $\widetilde{d}_j$ is the multiplicity of $p_j$
defined to be
\[
\widetilde{d}_j=d_j-\mathrm{ind}^+_{p_j},
\]
where $d_j+1$ is equal to the order of the pole of~
 ${\bf x}_z \mathrm{d}z$ at $p_j$.
\end{theorem}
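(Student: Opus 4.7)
The plan is to derive all three identities from the curvature formula \eqref{eq-totalcurvature} by showing that the integrand is exact away from a discrete singular set and then harvesting residues via Stokes' theorem; the Jorge–Meeks-type identity then follows by combining \eqref{eq-deg1}, \eqref{eq-deg2} with a canonical-divisor computation for $dh$ on $\overline M$.

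For \eqref{eq-deg1}, I first note the local primitive
\[
\frac{\phi_z\,\bar\psi_{\bar z}}{(\phi-\bar\psi)^2}\,dz\wedge d\bar z \;=\; d\alpha,\qquad \alpha\;:=\;\frac{\phi_z}{\bar\psi-\phi}\,dz,
\]
which is verified by computing $\bar\partial_{\bar z}$ of the coefficient. After a preliminary Möbius normalization via Remark~\ref{rem-trans}, so that no singular end has $\phi(p_j)=\infty$, the $(1,0)$-form $\alpha$ extends smoothly to $\overline M$ away from (i)~the poles of $\phi$ and (ii)~the singular ends $p_j$. Excising small coordinate disks $D_\epsilon(q)$ around each such point and applying Stokes on the complement gives
\[
\int_M\frac{\phi_z\,\bar\psi_{\bar z}}{(\phi-\bar\psi)^2}\,dz\wedge d\bar z \;=\; -\lim_{\epsilon\to 0}\sum_{q}\oint_{\partial D_\epsilon(q)}\alpha.
\]
Two local residue computations then suffice. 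At a pole of $\phi$ of order $k$ with $\psi$ regular, $\phi\sim cz^{-k}$ yields $\alpha\sim (k/z)\,dz$ and $\oint\alpha=2\pi ik$; summing over all poles gives $-2\pi i\deg\phi$. At a good singular end with leading expansion $\phi-\bar\psi\sim a_mz^{m}$ (the case $\mathrm{ind}=+m=\mathrm{ind}^{1,0}$), the computation $\alpha\sim -(m/z)\,dz$ yields $\oint\alpha=-2\pi im$; the symmetric case $\mathrm{ind}=-n$ (i.e.\ $\mathrm{ind}^{1,0}=0$) produces no pure $z^{-1}\,dz$ term and hence no residue. Multiplying by $2i$ and taking the real part of \eqref{eq-totalcurvature} gives exactly \eqref{eq-deg1}. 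Identity \eqref{eq-deg2} follows by the same argument applied to the conjugate primitive $\beta:=-\bigl(\bar\psi_{\bar z}/(\phi-\bar\psi)\bigr)\,d\bar z$, whose singular locus now consists of the poles of $\psi$ and the $\mathrm{ind}^{0,1}$-part of each singular end.

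For the third identity, adding \eqref{eq-deg1} and \eqref{eq-deg2} yields
\[
-\int_M K\,dM \;=\; 2\pi\Bigl(\deg\phi+\deg\psi-\sum_{j}\mathrm{ind}^{+}_{p_j}\Bigr),
\]
so it suffices to prove the divisor identity $\deg\phi+\deg\psi = 2g-2+r+\sum_j d_j$. I would derive this from the regularity condition~(2) of Theorem~\ref{thm-period}, which forces the zero divisor of $dh$ on the interior of $M$ to match the pole divisors of $\phi$ and $\psi$; combining with the canonical relation $\deg(dh)=2g-2$ on $\overline M$ and with the definition of $d_j+1$ as the pole order of ${\bf x}_z\,dz$ at $p_j$ (which can be read off from the leading order among the components of $(\phi+\psi,-i(\phi-\psi),1-\phi\psi,1+\phi\psi)\,dh$ in terms of the local orders of $\phi,\psi,dh$), a straightforward but case-dependent bookkeeping produces the required identity. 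Substituting $\tilde d_j = d_j-\mathrm{ind}^{+}_{p_j}$ then yields $-\int_M K\,dM = 2\pi(2g-2+r+\sum_j\tilde d_j)$, as desired.

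The main obstacle is the residue bookkeeping at the singular ends, and in particular the pole-order analysis for ${\bf x}_z\,dz$ in the last step. The good-end hypothesis (Proposition~\ref{prop-goodsingular}) is essential to legitimate both the absolute convergence of the curvature integral and the passage to the limit in Stokes' formula; the case where $\phi$ or $\psi$ itself has a pole at a singular end is handled by the Möbius normalization above, after which every local computation reduces to the two basic ones given in Step~2.
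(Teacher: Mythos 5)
This theorem is quoted from \cite{Ma} and the present paper contains no proof of it, so there is nothing in-paper to compare against; I can only assess your reconstruction on its own terms, and it is essentially sound. Your treatment of \eqref{eq-deg1} and \eqref{eq-deg2} is the standard argument-principle proof of the index formula: the primitive $\alpha=\frac{\phi_z}{\bar\psi-\phi}\,dz$ does satisfy $d\alpha=\frac{\phi_z\bar\psi_{\bar z}}{(\phi-\bar\psi)^2}dz\wedge d\bar z$, the residue at a pole of $\phi$ of order $k$ is $2\pi\mathrm{i}k$, at a good singular end with $m<n$ it is $-2\pi\mathrm{i}m=-2\pi\mathrm{i}\,\mathrm{ind}^{1,0}$, and when $m>n$ the boundary integral is $O(\epsilon^{m-n})$ and vanishes in the limit (this is exactly where the good-end hypothesis is needed, consistent with Proposition~\ref{prop-goodsingular}); note your computation also shows the right-hand sides are real, i.e.\ $\int_M K^{\perp}\mathrm{d}M=0$, which is consistent with the cited reference. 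For the Jorge--Meeks identity you take a route that is probably different from the source (which, as in the classical case, one would expect to prove by Gauss--Bonnet on the induced metric $\mathrm{e}^{2\omega}=4|\phi-\bar\psi|^2|\mathrm{d}h/\mathrm{d}z|^2$ with a geodesic-curvature computation at the ends): you instead add \eqref{eq-deg1} and \eqref{eq-deg2} and reduce to the divisor identity $\deg\phi+\deg\psi=2g-2+r+\sum_j d_j$. This does close up: writing $a_j,b_j$ for the pole orders of $\phi,\psi$ at $p_j$ and $c_j$ for the pole order of $\mathrm{d}h$ there, conditions 1) and 2) of Theorem~\ref{thm-period} give $\deg\phi+\deg\psi=2g-2+\sum_j(a_j+b_j+c_j)$, and a short case check on the four components of \eqref{x} (using that $\phi\pm\psi$ and $1\pm\phi\psi$ cannot all drop order simultaneously) shows the pole order of ${\bf x}_z\mathrm{d}z$ at $p_j$ is exactly $a_j+b_j+c_j=d_j+1$. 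The only point I would press you to make explicit is that single case check, which you leave as ``case-dependent bookkeeping''; everything else, including the M\"obius normalization removing $\phi=\infty$ at singular ends, is legitimate.
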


\begin{proposition}\cite{Ma}\label{ineq-multiplicity}
Let ${\bf x}:D-\{0\}\rightarrow \mathbb{R}^4_1$ be a regular or a good
singular end which is further assumed to be complete at $z=0$. Then
its multiplicity satisfies $\widetilde{d}\ge 1.$
\end{proposition}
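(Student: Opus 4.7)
The plan is to read both $d+1$ and $\mathrm{ind}^+$ off the local behaviour of the Weierstrass data $(\phi,\psi,\mathrm{d}h)$ at the end $z=0$, use completeness of the induced metric to get the weak bound $\widetilde d\ge 0$, and then invoke the horizontal period condition \eqref{eq-period1} to exclude the borderline case $\widetilde d = 0$. First I would normalize: the Möbius transformation of Remark~\ref{rem-trans} corresponds to a Lorentz rotation of $\mathbb{R}^4_1$ and so preserves both the pole order of $\mathbf{x}_z\mathrm{d}z$ and the topologically-defined index, so I can assume $\phi(0)=0$, with $\bar\psi(0)=0$ in the good singular case and $\bar\psi(0)$ finite and nonzero in the regular case. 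Expand $\phi = \phi_m z^m + O(z^{m+1})$, $\psi = \psi_0 + \psi_n z^n + O(z^{n+1})$, and $\mathrm{d}h = z^c(h_0 + h_1 z + \cdots)\,\mathrm{d}z$ with $\phi_m,\psi_n,h_0\neq 0$.

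Next I would carry out a direct order count at $z=0$ for the four components of $\mathbf{x}_z\mathrm{d}z = (\phi+\psi,-\mathrm{i}(\phi-\psi),1-\phi\psi,1+\phi\psi)\,\mathrm{d}h$. Since $1\pm\phi\psi$ are holomorphic and nonzero at the origin, the overall pole order matches that of $\mathrm{d}h$, giving $d+1=-c$ in both model cases. Meanwhile, starting from $\mathrm{d}s = 2|h_z|\,|\phi-\bar\psi|\,|\mathrm{d}z|$ (a direct computation from \eqref{x}), the transverse factor $|\phi-\bar\psi|$ behaves like $|\phi_m|\,|z|^m$ in the good singular case with $m<n$ and is bounded away from $0$ in the regular case. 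Writing $e=\mathrm{ind}^+$, completeness $\int_0\mathrm{d}s=\infty$ forces $c+e\le -1$, which combined with $d+1=-c$ yields $\widetilde d = d - e \ge 0$.

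The hard part will be eliminating the borderline case $c=-e-1$ (i.e.\ $\widetilde d=0$), where the metric diverges only logarithmically; I would do this by residue calculus on $\gamma=\{|z|=\varepsilon\}$. In the good singular sub-case $m<n$ with $c=-m-1$, one sees that $\phi\,\mathrm{d}h = \phi_m h_0\,z^{-1}(1+O(z))\,\mathrm{d}z$, so $\oint_\gamma \phi\,\mathrm{d}h = 2\pi\mathrm{i}\,\phi_m h_0\neq 0$; but $\psi\,\mathrm{d}h$ has $z$-order $n-m-1\ge 0$, so $\oint_\gamma \psi\,\mathrm{d}h = 0$. This contradicts $\oint_\gamma\phi\,\mathrm{d}h = -\overline{\oint_\gamma\psi\,\mathrm{d}h}$, forcing $c\le -m-2$ and hence $\widetilde d\ge 1$. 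The regular borderline $c=-1$ is ruled out identically: $\phi\,\mathrm{d}h$ is then holomorphic at $0$ with residue $0$, while $\psi\,\mathrm{d}h$ has nonzero residue $\psi_0 h_0$, again contradicting \eqref{eq-period1}.

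The remaining sub-cases---the good singular case with $m>n$, and regular ends at which $\phi(0)$ or $\psi(0)$ is $\infty$---either reduce to the ones above by the symmetric residue argument in which $\phi$ and $\psi$ exchange roles, or by applying the Möbius involution $\phi\mapsto 1/\phi$, $\psi\mapsto 1/\psi$ of Remark~\ref{rem-trans} before normalizing.
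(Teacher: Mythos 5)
This proposition is quoted from \cite{Ma} and the present paper gives no proof of it, so there is nothing internal to compare against; judged on its own, your argument is correct. The two-step structure --- metric asymptotics $\mathrm{d}s\sim|z|^{c+\mathrm{ind}^+}|\mathrm{d}z|$ plus completeness to get $\widetilde d\ge 0$, then the horizontal period condition around the puncture to exclude the borderline simple-pole case --- is exactly the natural generalization of Osserman's classical argument (where one instead uses that a real isotropic residue in $\mathbb{R}^3$ must vanish, which fails in $\mathbb{R}^4_1$ and is precisely why the asymmetric residue count for $\phi\,\mathrm{d}h$ versus $\psi\,\mathrm{d}h$ is the right replacement), and your residue computations in both the regular and good singular borderline cases check out.
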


For a non-orientable stationary surface in $\mathbb{R}^4_1$,
consider its oriented double covering surface. This allows
us to describe it using Weierstrass representation formula.
Indeed this is a direct generalization of Meeks' result in \cite{Meeks}.

\begin{theorem}\cite{MaWang}\label{thm-nonorientable}
Let $\widetilde{M}$ be a Riemann surface with an anti-holomorphic involution
$I:\widetilde{M}\to \widetilde{M}$ (i.e., a conformal automorphism of
$\widetilde{M}$ reversing the orientation) without fixed points.
Let $\{\phi,\psi,\mathrm{d}h\}$ be a set of Weierstrass data on $\widetilde{M}$
such that
\begin{equation}\label{eq-nonorientable}
\phi\circ I=\bar\psi,~~
\psi\circ I=\bar\phi,~~
I^*\mathrm{d}h=\overline{\mathrm{d}h},
\end{equation}
which satisfy the regularity and period conditions as well.
Then they determine a non-orientable stationary surface
\[
M=\widetilde{M}/\{\mathrm{id},I\}\to \mathbb{R}^4_1
\]
by the Weierstrass representation formula \eqref{x}.

Conversely, any non-orientable stationary surface ${\bf x}:M\to \mathbb{R}^4_1$
could be constructed in this way.
\end{theorem}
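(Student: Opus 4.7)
My plan is to reduce both implications to a single identity: the compatibility \eqref{eq-nonorientable} is equivalent to the transformation law $I^{*}\Phi=\overline{\Phi}$ for the holomorphic vector-valued $1$-form
\[
\Phi:=\bigl(\phi+\psi,\,-\mathrm{i}(\phi-\psi),\,1-\phi\psi,\,1+\phi\psi\bigr)\mathrm{d}h
\]
appearing in the Weierstrass formula \eqref{x}. Once this equivalence is established, each direction reduces to a standard descent or lifting argument.

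\noindent\textbf{Forward direction.} Assume the data on $\widetilde{M}$ satisfy \eqref{eq-nonorientable} together with the regularity and period conditions. Theorem~\ref{thm-period} produces a stationary immersion $\widetilde{\bf x}=2\,\mathrm{Re}\int\Phi$ on $\widetilde{M}$. A direct substitution verifies $I^{*}\Phi=\overline{\Phi}$ entry by entry; for instance $I^{*}\bigl((1-\phi\psi)\mathrm{d}h\bigr)=(1-\bar\psi\bar\phi)\overline{\mathrm{d}h}=\overline{(1-\phi\psi)\mathrm{d}h}$, and the other three components are analogous. Consequently, for any $\widetilde{p},\widetilde{q}\in\widetilde{M}$,
\[
\widetilde{\bf x}(I\widetilde{q})-\widetilde{\bf x}(I\widetilde{p})
=2\,\mathrm{Re}\int_{\widetilde{p}}^{\widetilde{q}} I^{*}\Phi
=2\,\mathrm{Re}\,\overline{\int_{\widetilde{p}}^{\widetilde{q}}\Phi}
=\widetilde{\bf x}(\widetilde{q})-\widetilde{\bf x}(\widetilde{p}).
\]
Hence the vector $c:=\widetilde{\bf x}\circ I-\widetilde{\bf x}$ is constant, and applying $I$ once more with $I^{2}=\mathrm{id}$ forces $c=-c$, so $c=0$. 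Thus $\widetilde{\bf x}$ descends to a well-defined immersion on $M=\widetilde{M}/\{\mathrm{id},I\}$, which is non-orientable because $I$ is fixed-point-free and reverses orientation.

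\noindent\textbf{Converse direction.} Given a non-orientable stationary surface ${\bf x}:M\to\mathbb{R}^4_1$, take the oriented double cover $\pi:\widetilde{M}\to M$ with deck involution $I$, which is anti-holomorphic and fixed-point-free by construction. The lift $\widetilde{\bf x}:={\bf x}\circ\pi$ is an oriented stationary surface, so Theorem~\ref{thm-period} furnishes Weierstrass data $\{\phi,\psi,\mathrm{d}h\}$ on $\widetilde{M}$. From $\widetilde{\bf x}\circ I=\widetilde{\bf x}$ we get $I^{*}d\widetilde{\bf x}=d\widetilde{\bf x}$; since $I$ is anti-holomorphic, $I^{*}\Phi$ is of type $(0,1)$, so comparing types in $\Phi+\overline{\Phi}=d\widetilde{\bf x}=I^{*}\Phi+\overline{I^{*}\Phi}$ yields $I^{*}\Phi=\overline{\Phi}$. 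Reading this entry by entry, the third and fourth coordinates force $I^{*}\mathrm{d}h=\overline{\mathrm{d}h}$ together with $(\phi\circ I)(\psi\circ I)=\bar\phi\bar\psi$, while the first and second give $\phi\circ I+\psi\circ I=\bar\phi+\bar\psi$ and $\phi\circ I-\psi\circ I=\bar\psi-\bar\phi$. The sum and difference of these last two yield $\phi\circ I=\bar\psi$ and $\psi\circ I=\bar\phi$, as required.

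\noindent\textbf{Main obstacle.} The one step requiring genuine care is the uniqueness of the Weierstrass data in the converse: they are defined only up to the M\"obius ambiguity of Remark~\ref{rem-trans}, which in particular allows interchange of the two null normal directions $\{\phi,\bar\psi\}$ at each point. Making the global branch choice on $\widetilde{M}$ consistent with the chosen orientation of the double cover is what singles out \eqref{eq-nonorientable} rather than a spurious variant such as $\phi\circ I=\bar\phi$; no other step presents a real difficulty.
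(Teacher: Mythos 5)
Your argument is correct and is exactly the standard one behind this result (the paper itself gives no proof, quoting it from \cite{MaWang} as a direct generalization of Meeks' double-cover construction \cite{Meeks}): the symmetry \eqref{eq-nonorientable} is equivalent to $I^{*}({\bf x}_z\,\mathrm{d}z)=\overline{{\bf x}_z\,\mathrm{d}z}$, which makes $2\,\mathrm{Re}\int$ descend to the quotient, while in the converse a type comparison applied to $I^{*}d\widetilde{\bf x}=d\widetilde{\bf x}$ recovers \eqref{eq-nonorientable} component by component. The only inaccuracy is your closing ``main obstacle'': for a fixed oriented immersion the data $(\phi,\psi,\mathrm{d}h)$ are read off uniquely from the four components of ${\bf x}_z\,\mathrm{d}z$, and the ambiguity of Remark~\ref{rem-trans} corresponds to Lorentz rotations of the ambient space rather than to a branch choice in the representation, so no such care is actually needed.
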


\section{Existence result}

Before stating our existence result, we first make an analysis
of possible examples with $-\int K\mathrm{d}M =6\pi$.
By the generalized Jorge-Meeks formula,
\[
-\int_M K\mathrm{d}M=6\pi=2\pi\Big(g+r-1+\sum_{j=1}^{r} \tilde{d}_j\Big)~.
\]
Thus $g+r+\sum_{j=1}^{r} \tilde{d}_j=4$.
The lower bound estimation \eqref{eq-bound} has ruled out the possibility of
$g\ge 1$. So $g=0$, and we are left with two cases:
one end with multiplicity $\tilde{d}=3$, or two ends with
multiplicities $\tilde{d}_1=\tilde{d}_2=1$.

In the first case where $M$ is a M\"obius strip,
we have Meeks' example and its generalization,
whose unique end is regular. It was unknown to us during the
work of \cite{MaWang} whether there exist more examples with
regular ends or good singular ends.

In the proposition below,
we not only confirm the existence of more examples,
but also construct the first non-orientable stationary surface
with a good singular end.

\begin{proposition}
There exists a one-parameter family of complete, immersed, stationary M\"obius strip
in $\mathbb{R}^4_1$ with a good singular end and $-\int K\mathrm{d}M =6\pi$.
\end{proposition}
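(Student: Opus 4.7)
The plan is to construct explicit Weierstrass data on the oriented double cover and then invoke Theorems~\ref{thm-period} and~\ref{thm-nonorientable}. Since the M\"obius strip has genus $0$ and a single end, the double cover must be $\widetilde M = \mathbb{C}P^1 \setminus \{0, \infty\}$ equipped with the fixed-point-free anti-holomorphic involution $I(z) = -1/\bar z$ swapping the two punctures. Condition~\eqref{eq-nonorientable} then reads $\psi(z) = \overline{\phi(-1/\bar z)}$ and, writing $\mathrm{d}h = g(z)\,\mathrm{d}z$, $g(-1/\bar z) = \bar z^2\,\overline{g(z)}$. A good singular end at $z=0$ requires $\phi(0) = \overline{\psi(0)}$, which by the symmetry is equivalent to $\phi(0) = \phi(\infty)$; after normalizing this common value to $0$ via Remark~\ref{rem-trans} and imposing different multiplicities at the two ends, Proposition~\ref{prop-goodsingular} ensures both ends are good singular.

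A natural ansatz is $\phi(z) = z/Q(z)$ with $Q\in\mathbb{C}[z]$ of degree $4$ and $Q(0)\neq 0$; this makes $\phi$ of multiplicity $1$ at $0$ and $3$ at $\infty$, with total degree $\deg\phi = 4$. The corresponding $\psi$ is then determined by the symmetry. The indices at the two ends satisfy $\mathrm{ind}^+ = 1$, so Theorem~\ref{GB2} yields $-\int_{\widetilde M}K\,\mathrm{d}M = 4\pi(4-1) = 12\pi$, which descends to $-\int_M K\,\mathrm{d}M = 6\pi$ on the quotient. For the height differential I would take $g$ rational with zeros of matching orders at the poles of $\phi$ and $\psi$ (as forced by regularity condition~2) of Theorem~\ref{thm-period}) and with poles of order $5$ at $0$ and $\infty$ (so that $\tilde d_j = d_j - \mathrm{ind}^+_j = 4-1 = 3$, compatible with Theorem~\ref{GB2}), subject to the involution-equivariance above. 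These constraints leave a finite-dimensional space of real parameters.

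The period conditions of Theorem~\ref{thm-period} reduce, since $\pi_1(\widetilde M)=\mathbb{Z}$ is generated by a single loop around $z=0$, to the three real residue identities
\[
\mathrm{Res}_0(\phi\,\mathrm{d}h) = \overline{\mathrm{Res}_0(\psi\,\mathrm{d}h)},\qquad \mathrm{Res}_0(\mathrm{d}h)\in\mathbb{R},\qquad \mathrm{Res}_0(\phi\psi\,\mathrm{d}h)\in\mathbb{R};
\]
the analogous conditions at $z=\infty$ follow automatically from the $I$-equivariance. Each residue depends linearly on the free coefficients, so the conditions carve out a linear subspace; after modding out the residual $S^1$-rotation $z\mapsto e^{\mathrm{i}\theta}z$ and the trivial overall real scaling of $\mathrm{d}h$, I expect the solution set to form a genuine one-parameter real family. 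Completeness at each end then follows from Proposition~\ref{ineq-multiplicity} since $\tilde d_j=3\geq 1$, and the descent to the M\"obius strip from Theorem~\ref{thm-nonorientable}.

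The main obstacle is verifying the non-degeneracy condition $\phi\neq\bar\psi$ on the interior $\widetilde M$: because $\phi-\bar\psi$ is only real-analytic and not holomorphic, ruling out interior singular points is not a residue computation but instead requires direct analysis of the explicit ansatz, typically reducing to an open polynomial inequality in the parameters. A secondary difficulty is ensuring that the resulting one-parameter family is genuine rather than being absorbed entirely by gauge and scaling; this can be confirmed by exhibiting two non-similar members, for instance by showing that an invariant such as the extremum of $|K|$ or $|K^\perp|$ varies non-trivially along the family.
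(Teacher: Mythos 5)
Your overall strategy (Weierstrass data on the double cover $\mathbb{C}\setminus\{0\}$ with $I(z)=-1/\bar z$, reduction of the period conditions to residues at $z=0$, explicit verification of $\phi\neq\bar\psi$) is the same as the paper's, and your degree and index bookkeeping is correct. However, the specific ansatz you propose cannot work, and the obstruction is precisely the mechanism the paper uses to kill a neighbouring subcase. If $\phi(z)=z/Q(z)$ with $\deg Q=4$ and $Q(0)\neq 0$, then $\psi(z)=\overline{\phi(-1/\bar z)}=-z^{3}/\widetilde Q(z)$ vanishes to order $3$ at $z=0$, and condition 2) of Theorem~\ref{thm-period} together with $I^{*}\mathrm{d}h=\overline{\mathrm{d}h}$ and the required pole order $5$ forces
\[
\mathrm{d}h=\mathrm{i}\,c\,\frac{Q(z)\,\widetilde Q(z)}{z^{5}}\,\mathrm{d}z,\qquad c\in\mathbb{R}\setminus\{0\},
\]
so that $\phi\psi\,\mathrm{d}h$ is identically a nonzero multiple $\mathrm{i}s\,\mathrm{d}z/z$ with $s\in\mathbb{R}\setminus\{0\}$. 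Its period around $z=0$ equals $-2\pi s$, which is real and nonzero, so the vertical period condition $\mathrm{Re}\oint\phi\psi\,\mathrm{d}h=0$ fails for every choice of $Q$. (This is exactly how the paper rules out the subcase $\mathrm{ind}=2$.) The workable ansatz is $\phi(z)=z(b_{1}z+b_{0})/Q(z)$ with $b_{0},b_{1}$ both nonzero, so that $\psi$ vanishes to order exactly $2$; then the residue of $\phi\psi\,\mathrm{d}h$ is $\mathrm{i}(|b_{0}|^{2}-|b_{1}|^{2})$ times a real constant and the vertical condition becomes the solvable constraint $|b_{0}|=|b_{1}|$.

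Second, the step you defer to the end --- showing $\phi(z)\neq\bar\psi(z)=\phi(-1/\bar z)$ for all $z\in\mathbb{C}\setminus\{0\}$ --- is not a soft openness or genericity argument here; it is the actual content of the existence proof, since the period conditions alone only produce candidate data. The paper resolves it by fixing an explicit family ($b_{0}=-1$, $b_{1}=1$, $a_{1}=a_{2}=0$, $a_{0}=\sqrt{2\epsilon+\epsilon^{2}}$, $a_{3}=1+\epsilon$, $a_{4}=1$), noting that a nontrivial coincidence would force the quartic $\phi(z)=t$ to admit the root pair $z_{0},-1/\bar z_{0}$, and deriving a contradiction from Vi\`eta's formulas when $\epsilon>0$ is small; the estimate works because at $\epsilon=0$ the data degenerate to Meeks' M\"obius strip, which is regular. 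Without carrying out some version of this computation (and without the corrected ansatz above), your argument does not yet establish existence, and the claim that a genuine one-parameter family survives the gauge quotient likewise remains unverified.
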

\begin{proof}
Let the oriented double covering surface be
$\widetilde{M}=\mathbb{C}\backslash\{0\}$ with two good singular
ends $0,\infty$, and the orientation reversing involution
without fixed points be $I:z\mapsto -1/\bar{z}$.

Without loss of generality,
suppose the good singular end $z=0$ has $\mathrm{ind}=m$
and both $\phi,\psi$ take the value $0$.
Then $\phi(0)=0$ with multiplicity $m$,
$\psi(0)=0$ with multiplicity greater than $m$.
On the other hand, the index formula together with
$-\int_M K\mathrm{d}M=6\pi$ implies $\deg\phi=m+3$.
Thus the meromorphic $\phi$ over the Riemann sphere must take
the form
\[
\phi(z)=\frac{z^m(b_3 z^3+ b_2 z^2+ b_1 z+ b_0)}
{a_{m+3}z^{m+3}+\cdots+ a_1 z+a_0},
\]
and
\[
\psi(z)=\overline{\phi(-1/\bar{z})}=
\frac{-\bar{b}_3+\bar{b}_2z-\bar{b}_1z^2+\bar{b}_0z^3}
{(-1)^{m+3}\bar{a}_{m+3}+\cdots+\bar{a}_0z^{m+3}}.
\]
Since $\psi(0)=0$, we know $b_3=0$. Since $\deg\phi=m+3$,
there must be $a_{m+3}\ne 0$. Finally, $\psi(0)=0$ with a
multiplicity at most $3$, so $m=1$ or $2$.

If there exists an example as above with $m=2$, then $\psi(0)=0$
with multiplicity $3$, thus $b_1=b_2=b_3=0$.
$b_0$ has to be nonzero, and one may assume $b_0=1$
without loss of generality.
As the consequence of $\deg\phi=m+3=5$, we obtain
\[
\phi(z)=\frac{z^2}{a_5z^5+\cdots+ a_1 z+a_0}, ~~
\psi(z)=\frac{z^3}{-\bar{a}_5+\cdots+\bar{a}_0z^5}.
\]
Since $\mathrm{d}h$ is holomorphic on $\widetilde{M}=\mathbb{C}\backslash\{0\}$,
and $I^*\mathrm{d}h=\overline{\mathrm{d}h}$, up to a real non-zero constant it takes the form
\[
\mathrm{d}h=\mathrm{i}\frac{(a_5z^5+\cdots+ a_1 z+a_0)(-\bar{a}_5+\cdots+\bar{a}_0z^5)}{z^6}\mathrm{d}z.
\]
But $\phi\psi\mathrm{d}h=\mathrm{i}\frac{\mathrm{d}z}{z}$
has residue $-2\pi$, which violates the (vertical) period condition.

Thus the possible example with a good singular end must have
$\mathrm{ind}_0=m=1$ and $\deg\phi=4$. $\psi(0)=0$ with a multiplicity
at least $2$, so $b_3=b_2=0$. Now we may write
\begin{equation}\label{eq-example1}
\phi(z)=\frac{z(b_1 z+ b_0)}{a_4z^4+\cdots+ a_1 z+a_0},~~
\psi(z)=\frac{-\bar{b}_1z^2+\bar{b}_0z^3}
{\bar{a}_4+\cdots+\bar{a}_0z^4}.
\end{equation}
By the similar reason as in the paragraph above, up to a real factor we have
\begin{equation}\label{eq-example2}
\mathrm{d}h=\mathrm{i}\frac{(a_4z^4+\cdots+ a_1 z+a_0)(\bar{a}_4+\cdots+\bar{a}_0z^4)}{z^5}\mathrm{d}z.
\end{equation}
Taking integral along a simple closed path around $z=0$. The period conditions imply
\begin{eqnarray}
\mathrm{Re}\oint \mathrm{d}h=0 ~~&\Rightarrow&~~
|a_4|^2-|a_3|^2+|a_2|^2-|a_1|^2+|a_0|^2=0,\label{eq-period1}\\
\mathrm{Re}\oint \phi\psi\mathrm{d}h=0 ~~&\Rightarrow&~~
|b_1|^2-|b_0|^2=0,\label{eq-period2}\\
\oint \phi\mathrm{d}h+\overline{\oint \psi\mathrm{d}h}=0
~~&\Rightarrow&~~ b_1\bar{a}_2-b_0\bar{a}_1=0.\label{eq-period3}
\end{eqnarray}
Now we choose the parameters
\begin{equation}\label{eq-parameter}
b_0=-1,b_1=1,a_1=a_2=0,a_0=\sqrt{2\epsilon+\epsilon^2},
a_3=1+\epsilon,a_4=1,
\end{equation}
which satisfy the period conditions \eqref{eq-period1}\eqref{eq-period2}\eqref{eq-period3}.
We need only to verify that $\phi(z)\ne\bar\psi(z)$
for the corresponding functions given by \eqref{eq-example1}
and parameters chosen above.

We assert that when $\epsilon\in\mathbb{R}$ is a sufficiently
small positive number,
\[\phi(z)=\bar\psi(z)=\phi(-1/\bar{z})\]
has no solutions except the trivial ones $z=0$ (and $z=\infty$).
Otherwise, suppose there is a non-zero solution
$z_0=r\mathrm{e}^{\mathrm{i}\theta}\in\mathbb{C}\backslash\{0\}$
with radial $r>0$ and argument $\theta$ such that
\[\phi(z_0)=\phi(-1/\bar{z}_0)=t\]
for some complex value $t$. (Note that when $z_0\ne 0$,
$t$ must be nonzero. Otherwise there is no such solution $z_0$.)
Invoking \eqref{eq-example1}\eqref{eq-parameter}, we may rewrite
\begin{equation}\label{eq-z0}
\phi(z)=t ~~\Leftrightarrow~~
z^4+(1+\epsilon)z^3-\frac{1}{t}(z^2-z)+\sqrt{2\epsilon+\epsilon^2}=0,
\end{equation}
which have two roots $z_0,-1/\bar{z}_0$ and other two $z_1,z_2$.
By Vi\`{e}ta's formula,
\begin{eqnarray}
  z_0-\frac{1}{\bar{z}_0}+z_1+z_2 &=& -(1+\epsilon), \label{eq-vieta1} \\
  -\frac{z_0}{\bar{z}_0}+z_1 z_2 +(z_0-\frac{1}{\bar{z}_0})(z_1+z_2) &=& -\frac{1}{t},\label{eq-vieta2}\\
  -\frac{z_0}{\bar{z}_0}(z_1+z_2)+z_1 z_2 (z_0-\frac{1}{\bar{z}_0}) &=& -\frac{1}{t}, \label{eq-vieta3}\\
  -\frac{z_0}{\bar{z}_0}\cdot z_1 z_2 &=& \sqrt{2\epsilon+\epsilon^2}.\label{eq-vieta4}
\end{eqnarray}
Express $z_1+z_2$ and $z_1z_2$ in terms of
$z_0,-\frac{1}{\bar{z}_0}$ using
\eqref{eq-vieta1}\eqref{eq-vieta4}.
Since $z_0=r\mathrm{e}^{\mathrm{i}\theta}$,
$-\frac{z_0}{\bar{z}_0}=-\mathrm{e}^{2\mathrm{i}\theta}$,
$z_0-\frac{1}{\bar{z}_0}=(r-\frac{1}{r})\mathrm{e}^{\mathrm{i}\theta}$.
Substitute these into \eqref{eq-vieta2}\eqref{eq-vieta3}
and eliminate $\frac{1}{t}$. Collecting terms we get
\begin{eqnarray*}
  0 &=& \mathrm{e}^{2\mathrm{i}\theta}
  \left[(r-\frac{1}{r})^2+2(r-\frac{1}{r})\cos\theta+2\right] \\
  && +\epsilon\cdot\mathrm{e}^{\mathrm{i}\theta}
  (r-\frac{1}{r}+\mathrm{e}^{\mathrm{i}\theta})  -\sqrt{2\epsilon+\epsilon^2}\mathrm{e}^{-\mathrm{i}\theta}
  (r-\frac{1}{r}-\mathrm{e}^{-\mathrm{i}\theta})\\
  &=& \mathrm{e}^{2\mathrm{i}\theta}(|w|^2+1)
  +\epsilon \cdot\mathrm{e}^{\mathrm{i}\theta} w
  -\sqrt{2\epsilon+\epsilon^2} \mathrm{e}^{-\mathrm{i}\theta}(w-2\cos\theta)
\end{eqnarray*}
with $w=r-\frac{1}{r}+\mathrm{e}^{\mathrm{i}\theta}$.
Because $(|w|^2+1)/(|w|+1)$ has a positive lower bound,
it is easy to see that when $\epsilon$ is small enough the equality could not hold true for any $r$ and $\theta$.
This verifies $\phi(z)\ne\bar\psi(z)$ for any $z\ne 0$
and finishes the proof.
\end{proof}

\begin{remark}
An interesting observation is that when $\epsilon=0$,
the Gauss maps and the height differential given by
\eqref{eq-example1}\eqref{eq-example2} degenerate to
\[
\phi=\frac{z-1}{z^2(z+1)}=-1/\psi, ~~\mathrm{d}h=\mathrm{i}\frac{(z+1)(z-1)}{z^2}\mathrm{d}z,
\]
which are exactly the Weierstrass data of Meeks' M\"obius strip
in $\mathbb{R}^3$ \cite{Meeks} (up to a change of coordinate),
which satisfies the regularity condition $\phi\ne\bar\psi$ obviously.
This also provides an example of a family of
stationary surfaces with a good singular end which having
its limit as a regular end of a stationary surface.
\end{remark}

In the end of this section, we briefly give the description
of the known examples with $-\int K\mathrm{d}M=6\pi$.

\begin{example}
[Generalization in $\mathbb{R}^4_1$ of Meeks' minimal M\"obius strip \cite{MaWang}]
\label{exa-meeks}
This is defined on $\widetilde{M}=\mathbb{C}\backslash\{0\}$ with involution $I:z\to -1/\bar{z}$,
and the Weierstrass data be
\begin{equation}
\phi=\frac{z-\lambda}{z-\bar\lambda}\cdot z^{2m},~~
\psi=\frac{1+\bar\lambda z}{1+\lambda z}\cdot\frac{1}{z^{2m}},~~
\mathrm{d}h=\mathrm{i}\frac{(z-\bar\lambda)(1+\lambda z)}{z^2}\mathrm{d}z,
\end{equation}
where $\lambda$ is a complex parameter satisfing
$\lambda\ne \pm 1, |\lambda|=1$, and the integer $m\ge 1$.
When $m=1$ we have a one-parameter family of examples with total curvature $6\pi$ .
\end{example}
When $\lambda=\pm\mathrm{i}$ we have $\phi=-1/\psi$, and the example above
is equivalent to Oliveira's examples in $\mathbb{R}^3$
 \cite{Oliveira}.
(Meeks' example \cite{Meeks} corresponds to the case $m=1,\lambda=\mathrm{i}$.)

It is not difficult to imagine that Meeks' example in $\mathbb{R}^3$ allows
a larger family of deformations depending on more parameters,
and the moduli space has a higher dimension.
The rough idea behind this belief is that the regularity condition $\phi\ne \bar\psi$ is an open property of the parameter space. Yet we are not interested in deciding this moduli space, even the dimension.

Below is another interesting family with an essential singularity in their Weierstrass data at the end.

\begin{example}
[stationary M\"obius strips with essential singularities and finite total curvature \cite{MaWang}]
\label{exa-essen2}
\begin{equation}\label{ex-essential2}
\phi=z^{2p-1}\mathrm{e}^{\frac{1}{2}(z-\frac{1}{z})},~~
\psi=\frac{-1}{z^{2p-1}}\mathrm{e}^{\frac{1}{2}(z-\frac{1}{z})},~~
\mathrm{d}h=\mathrm{d}~\mathrm{e}^{-\frac{1}{2}(z-\frac{1}{z})}.~~(p\in\mathbb{Z}_{\ge 2})
\end{equation}
These are complete immersed stationary M\"obius strip with finite total curvature
\[
-\int_M K\mathrm{d}M=2(2p-1)\pi,~~\int_M K^{\perp}\mathrm{d}M=0.
\]
In particular, when $p=2$ we have an example with total Gaussian curvature $6\pi$.
\end{example}

\section{Non-existence of examples with two ends}

This part is devoted to the most involved proof of our conclusion 2 that the topological type of the projective plane with two punctures never occur. We make an interesting observation at the beginning.

\begin{lemma}\label{lem-flux}
Let ${\bf x}:M\to \mathbb{R}^4_1$ be an algebraic stationary surface which is homomorphic to a M\"obius strip with several punctures. Then its period at any end must vanish.
\end{lemma}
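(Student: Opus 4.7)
The plan is to lift everything to the oriented double cover $\widetilde{M}$ via Theorem~\ref{thm-nonorientable}. Since $M$ is topologically $\mathbb{R}P^2$ minus $r$ points, $\widetilde{M}$ compactifies to $S^2$, and its $2r$ punctures split into pairs $\{q_j, q_j'\}_{j=1}^r$ with $I(q_j) = q_j'$. For each end $p_j$ of $M$, let $\gamma_j$ be a small positively oriented loop around $q_j$ in $\widetilde{M}$, and write the four complex periods as $A_j, B_j, C_j, D_j$ (for $\phi\,\mathrm{d}h$, $\psi\,\mathrm{d}h$, $\mathrm{d}h$, $\phi\psi\,\mathrm{d}h$ respectively), with the analogous primed notation at $q_j'$.

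The first step uses the equivariance relations from Theorem~\ref{thm-nonorientable}---namely $\phi\circ I=\bar{\psi}$, $\psi\circ I=\bar{\phi}$, $I^{*}\mathrm{d}h=\overline{\mathrm{d}h}$---together with the fact that the orientation-reversing $I$ sends the counter-clockwise loop $\gamma_j$ to $(\gamma_j')^{-1}$. Using the naturality formula $\oint_{I\circ\gamma_j}\omega = \oint_{\gamma_j} I^{*}\omega$, a short calculation yields
\[
A_j' = -\overline{B_j},\quad B_j' = -\overline{A_j},\quad C_j' = -\overline{C_j},\quad D_j' = -\overline{D_j}.
\]
Applying the horizontal period condition $B_j=-\overline{A_j}$ and the vertical conditions $\mathrm{Re}\,C_j = \mathrm{Re}\,D_j = 0$, the conjugations disappear and one gets the clean identities $A_j'=A_j$, $B_j'=B_j$, $C_j'=C_j$, $D_j'=D_j$.

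Since the M\"obius strip has oriented genus zero, $\widetilde{M}$ compactifies to $S^2$, and by the regularity condition in Theorem~\ref{thm-period} the four forms extend to meromorphic 1-forms on $S^2$ with poles only at the $2r$ punctures. Summing residues gives $\sum_j(A_j+A_j') = 2\sum_j A_j = 0$, and similarly $\sum_j C_j = \sum_j D_j = 0$. In the single-end case $r=1$ this immediately forces $A_1 = C_1 = D_1 = 0$, and hence $B_1 = -\overline{A_1} = 0$; all four periods at the unique end vanish, which is precisely the claim.

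The hard part is the case $r \geq 2$: the residue theorem alone only delivers the \emph{sum} of periods over all ends, and individual vanishing does not formally follow from this plus the involution symmetry (which only pairs $q_j$ with its partner $q_j'$ and never relates distinct ends $p_j \neq p_k$). I expect the argument to need an extra input particular to the non-orientable setting---perhaps a rigidity constraint on the algebraic Weierstrass data forced by $\phi\circ I=\bar{\psi}$, or a matching of each end's local residue with a \emph{twisted} global quantity on $M$ arising from the one-sided loop $\alpha$ (so that the non-trivial class of $H_1^-(\widetilde M)$ pairs trivially with each individual $\gamma_j$). This is where I anticipate the main difficulty, and presumably where the author's proof does its real work.
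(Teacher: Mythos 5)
Your overall strategy---lift to the oriented double cover, exploit the equivariance $\phi\circ I=\bar\psi$, $\psi\circ I=\bar\phi$, $I^*\mathrm{d}h=\overline{\mathrm{d}h}$ together with the period conditions of Theorem~\ref{thm-period}, and then use the global meromorphic structure on $S^2$---is exactly the strategy of the paper's proof, and the steps you carry out are correct: the relations $A_j'=-\overline{B_j}$, etc., the resulting identities $A_j'=A_j$, $C_j'=C_j$, $D_j'=D_j$, the residue-sum relations $\sum_j A_j=\sum_j C_j=\sum_j D_j=0$, and the complete argument for $r=1$. The genuine gap is the one you name yourself: the lemma is stated for, and later applied to, a M\"obius strip with \emph{several} punctures. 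The two-ended non-existence arguments of Section~4 invoke precisely the $r=2$ case in order to write the partial fraction decomposition of ${\bf x}_z$ with no simple-pole terms at $0,c,-1/c$. So as written your proposal establishes a strictly weaker statement than the one the paper needs, and the case that matters is left open.

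Concerning what the paper does at the point where you stop: it packages the four periods into the single vector residue $v_1$ of ${\bf x}_z\mathrm{d}z$ at $z=0$, notes that $v_1$ is real (this is your $A_j'=A_j$ computation in disguise), and then asserts that comparing the coefficients of $\mathrm{d}\bar z/\bar z$ on both sides of $I^*({\bf x}_z\mathrm{d}z)=\overline{{\bf x}_z\mathrm{d}z}$ gives $v_1=-v_1$. Before accepting this as the missing input, test it against your own bookkeeping. Since $I(z)=-1/\bar z$ sends $0$ to $\infty$, the coefficient of $\mathrm{d}\bar z/\bar z$ in $I^*({\bf x}_z\mathrm{d}z)$ is the residue of ${\bf x}_z\mathrm{d}z$ at $\infty$, and every simple-pole term $u\,\mathrm{d}z/(z-c)$ at another puncture contributes to it, because $I^*\bigl(\mathrm{d}z/(z-c)\bigr)=-\mathrm{d}\bar z/\bigl(\bar z(1+c\bar z)\bigr)$ has residue $-1$ at $\bar z=0$. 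Carried out literally, the comparison therefore yields $\overline{v_1}=-v_1-\sum(\text{residues at the other punctures})$, which, combined with the equality of the residues at $c$ and $-1/\bar c$, is exactly the system you already derived from the residue theorem and the involution: it pins down only the sum of the periods over the ends, not each period individually. So your diagnosis of where the real difficulty lies is accurate, and the one-line coefficient comparison in the paper does not visibly supply an ingredient beyond what you have; to close the gap for $r\ge 2$ one needs either a genuinely additional argument (for instance one exploiting the full pole structure of ${\bf x}_z\mathrm{d}z$ at each end, or the conformality condition ${\bf x}_z\cdot{\bf x}_z=0$, rather than the residues alone), or the lemma must be weakened to the statement $\sum_j(\text{period at }p_j)=0$ and the subsequent propositions re-examined accordingly.
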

\begin{proof}
Without loss of generality, let $\widetilde{M}=\mathbb{C}\backslash\{0,c,-1/\bar{c},\cdots\}$
be the oriented double covering of $M=\widetilde{M}/\{I,id\}$ with
orientation-reversing involution $I(z)=-1/\bar{z}$, and $z=0$ corresponds to the end in concern.

By assumption, ${\bf x}_z \mathrm{d}z$ is a vector-valued meromorphic 1-form globally defined on
$\mathbb{C}\cup\{\infty\}$. So it can has a unique partial fraction decomposition. Because
$z=0$ is a pole of ${\bf x}_z \mathrm{d}z$, this sum contains the term $v_1\frac{\mathrm{d}z}{z}$.
The period condition shows that $v_1\in \mathbb{R}^4_1$ is a real vector.

On the other hand, Theorem~\ref{thm-nonorientable} implies that this
vector-valued 1-form, or equivalently, the sum of these partial fractions, satisfies
\[
I^*({\bf x}_z \mathrm{d}z)=\overline{{\bf x}_z \mathrm{d}z}.
\]
Since $I(z)=-1/\bar{z}$, by comparing the coefficients of the term $\mathrm{d}\bar{z}/\bar{z}$ at both sides, we see $v_1=-v_1$.
So the period vector $v_1$ must vanish.
\end{proof}

\begin{remark}
We do not know whether our result still holds true for non-oriented stationary surfaces in $\mathbb{R}^4_1$ with higher genus.
Note that the proof above relies on three facts: the existence
of the global coordinate $z$, the explicit form of
$I:z\to -1/\bar{z}$, and the expression of a meromorphic function as a sum of partial fractions. All of them depend on the assumption
that the compactification of the oriented double covering is the Riemann sphere.
\end{remark}

To show the non-existence of complete, immersed stationary surfaces with total curvature $-\int K\mathrm{d}M=6\pi$ and two ends, we use proof by contradiction.
Suppose there is such one ${\bf x}:M\to \mathbb{R}^4_1$. Then it must be non-oriented, whose
double covering space $\widetilde{M}$ is the 2-sphere punctured at two pairs of antipodal points (which must be located on a
unique circle).
The antipodal map could be represented by the involution $I:z\to -1/\bar{z}$ on the Riemann sphere $\mathbb{C}\cup\{\infty\}$,
and the circle corresponds to the real line on $\mathbb{C}$.
Thus the four punctures may be chosen to be $\{0,\infty; c,-1/c\}$ and the parameter $c\in\mathbb{R}-\{0\}$.
\begin{proposition}
There exists no complete, immersed stationary surface ${\bf x}:M\to \mathbb{R}^4_1$ with total curvature $-\int K\mathrm{d}M=6\pi$ and two regular ends.
\end{proposition}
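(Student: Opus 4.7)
The plan is to lift everything to the oriented double cover $\widetilde M=\mathbb{C}\cup\{\infty\}\setminus\{0,\infty,c,-1/c\}$ with $c\in\mathbb{R}\setminus\{0\}$ and involution $I(z)=-1/\bar{z}$, together with $\widetilde d_1=\widetilde d_2=1$, all of which are already forced at the beginning of Section~3. Since both ends of $M$ are regular, each of the four punctures of $\widetilde M$ is a regular end, so $\mathrm{ind}^+=0$ at every puncture; plugging this and $-\int_{\widetilde M}K\,dM=12\pi$ into \eqref{eq-deg1} on $\widetilde M$ gives $\deg\phi=\deg\psi=3$, and $d_j=\widetilde d_j=1$ means that ${\bf x}_z\,dz$ has a pole of order exactly $2$ at each of the four punctures. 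Then I would invoke Lemma~\ref{lem-flux}: the flux at each end of $M$ vanishes, and after pairing the four punctures into two $I$-orbits this is equivalent to the vanishing, at every one of the four punctures, of the residues of each of the four scalar $1$-forms $dh$, $\phi\,dh$, $\psi\,dh$, and $\phi\psi\,dh$.

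Next, I would parametrize $\phi=P/Q$ as a degree-$3$ rational function on $S^2$; the symmetry $\psi(z)=\overline{\phi(-1/\bar{z})}$ then determines $\psi$ as a degree-$3$ rational function whose pole divisor is the $I$-image of that of $\phi$. Condition 2) of Theorem~\ref{thm-period} together with $I^{*}dh=\overline{dh}$ and the requirement that $dh$ may have poles only at the four punctures pins $dh$ down, up to a nonzero real scale, once the pole divisor of $\phi$ is fixed. I would then run a case analysis on that pole divisor: how many of the three poles of $\phi$ lie in the interior of $\widetilde M$ versus at one of $0,\infty,c,-1/c$, and with what multiplicities. This gives finitely many normal forms for $(\phi,\psi,dh)$, and substituting each of them into the residue and period equations above produces an algebraic system in the remaining coefficients of $P, Q$ and in $c$.

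In most configurations this system will force so many coefficients of $P$ and $Q$ to vanish that $\deg\phi<3$, contradicting the index calculation. In the surviving configurations a small residual family of $(\phi,\psi,dh)$ passes all of the residue and period tests, and the remaining task is to exhibit a nontrivial zero of $\phi(z)-\phi(-1/\bar{z})$ on $\widetilde M$, which would violate the regularity condition 1) of Theorem~\ref{thm-period}. I expect this last step to be the main obstacle: unlike the existence argument of Section~3, which kept an explicit positive quantity strictly positive by a small real perturbation, here one must \emph{produce} a root of a mixed holomorphic/antiholomorphic equation for every surviving family. I would attack this either by explicit factorization in each residual family or by a topological-degree argument exploiting that $\phi\colon S^2\to S^2$ has degree $3$ while $I$ is a fixed-point-free involution of $S^2$, so that generically the two degree-$3$ maps $\phi$ and $\phi\circ I$ cannot avoid coinciding on at least one $I$-orbit in $\widetilde M$.
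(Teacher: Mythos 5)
Your overall framing (lifting to the double cover with involution $I(z)=-1/\bar z$, getting $\deg\phi=\deg\psi=3$ from \eqref{eq-deg1}, and using Lemma~\ref{lem-flux} to kill the residues of $\mathrm{d}h$, $\phi\,\mathrm{d}h$, $\psi\,\mathrm{d}h$, $\phi\psi\,\mathrm{d}h$ at all four punctures) is correct, but what follows is a plan rather than a proof, and its two load-bearing steps are not carried out. First, the whole case analysis on the pole divisor of $\phi$ and the resolution of the resulting algebraic systems is deferred (``in most configurations this system will force\dots''); nothing is actually established. Second, and more seriously, your fallback for the surviving configurations is unsound: the equation $\phi(z)=\overline{\psi(z)}=\phi(-1/\bar z)$ is a mixed holomorphic/antiholomorphic condition, and there is no topological-degree obstruction forcing it to have a solution merely because $\phi$ and $\phi\circ I$ both have degree $3$ while $I$ is fixed-point-free. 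The paper's own Example~\ref{exa-meeks} and the new example of Section~3 exhibit Gauss maps of degree at least $3$ related by exactly this involution with $\phi\ne\bar\psi$ everywhere, so the claim that generically two such maps cannot avoid coinciding on an $I$-orbit is false. Without a concrete mechanism to exclude the residual families, the argument does not close.

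The paper's actual proof avoids all of this. It writes the vector-valued form ${\bf x}_z$ directly as a partial-fraction sum
$\frac{1}{z^2}{\bf v}_1+\bar{\bf v}_1+\frac{1}{(z-c)^2}{\bf u}_1+\frac{1}{(cz+1)^2}\bar{\bf u}_1$
(order-two poles with no residue at each regular end, symmetric under $I$), and then imposes the conformality condition ${\bf x}_z\cdot{\bf x}_z=0$ coefficient by coefficient in the partial-fraction basis. This determines all mutual inner products of ${\bf v}_1,{\bf u}_1$ and their conjugates, and one checks that no isotropic ${\bf u}_1\in\mathbb{C}^4_1$ with spacelike, mutually orthogonal, equal-length real and imaginary parts can realize them: the relations force $|\alpha_3|>|\alpha_4|$ and $|\beta_3|>|\beta_4|$ while simultaneously demanding $\alpha_3\beta_3=\alpha_4\beta_4$. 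So the contradiction appears already at the level of linear algebra on the coefficient vectors, before any question of the regularity condition $\phi\ne\bar\psi$ arises; your expectation that some family survives all the residue and period tests and must then be excluded by producing a singular point is most likely not what happens. To salvage your route you would need to grind through the residue/period system for $(\phi,\psi,\mathrm{d}h)$ in each pole configuration and show it is inconsistent, which amounts to redoing the paper's computation in less convenient coordinates.
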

\begin{proof}
On the orientable double covering space
$\widetilde{M}=\mathbb{C}\backslash\{0,c,-1/c\}$,
${\bf x}_z \mathrm{d}z$ has a pole of order 2 without residue
at each regular end $0,\infty, c,-1/c$. As a vector-valued rational function, ${\bf x}_z$ has a decomposition into rational fractions as below
\[
{\bf x}_z=\frac{1}{z^2}{\bf v}_1+\bar{\bf v}_1
+\frac{1}{(z-c)^2}{\bf u}_1
+\frac{1}{(cz+1)^2}\bar{\bf u}_1
\]
according to the restriction
$\overline{{\bf x}_z \mathrm{d}z}=I^*({\bf x}_z \mathrm{d}z)$.

As a well-known consequence of the conformal condition,
${\bf v}_1,{\bf u}_1\in \mathbb{C}^4_1$ are both isotropic vectors,
and the assumption of regular ends implies that the real
and imaginary parts of ${\bf v}_1,{\bf u}_1$ must be
spacelike vectors orthogonal to each other with equal length.
The conformal condition further implies
\begin{eqnarray}
0=\frac{1}{2}{\bf x}_z\cdot {\bf x}_z&=&
\frac{{\bf v}_1{\bf u}_1}{z^2(z-c)^2}
+\frac{{\bf v}_1\bar{\bf u}_1}{z^2(cz+1)^2}
+\frac{{\bf u}_1\bar{\bf u}_1}{(z-c)^2(cz+1)^2}\notag \\
&&~~
+\frac{{\bf v}_1\bar{\bf v}_1}{z^2}
+\frac{\bar{\bf v}_1{\bf u}_1}{(z-c)^2}
+\frac{\bar{\bf v}_1\bar{\bf u}_1}{(cz+1)^2}~.\label{eq-conf1}
\end{eqnarray}
Here terms like ${\bf v}_1{\bf u}_1$ are inner products between
(complex) vectors. Because
\begin{eqnarray}
\frac{1}{z^2(z-c)^2}&=&
\frac{1}{c^2}\left[\frac{1}{z^2}+\frac{1}{(z-c)^2}\right]
+\frac{2}{c^3}\left[\frac{1}{z}-\frac{1}{z-c}\right]~,
\label{eq-frac1}\\
\frac{1}{z^2(cz+1)^2}&=&
\frac{1}{z^2}+\frac{c^2}{(cz+1)^2}
-\frac{2c}{z}+\frac{2c^2}{cz+1}~,
\label{eq-frac2}\\
\frac{1}{(z-c)^2(cz+1)^2}&=&
\frac{1}{(1+c^2)^2}\frac{1}{(z-c)^2}
+\frac{c^2}{(1+c^2)^2}\frac{1}{(cz+1)^2}\notag\\
&&~-\frac{2c}{(1+c^2)^3}\frac{1}{z-c}
+\frac{2c^2}{(1+c^2)^3}\frac{1}{cz+1}~,
\label{eq-frac3}
\end{eqnarray}
the vanishing of the coefficients of the terms like
$\frac{1}{z},\frac{1}{z-c},\cdots,\frac{1}{(cz+1)^2}$
in \eqref{eq-conf1} yields
\begin{eqnarray*}
{\bf v}_1{\bf u}_1\cdot\frac{2}{c^3}
-2c{\bf v}_1\bar{\bf u}_1=0,\\
{\bf v}_1{\bf u}_1\cdot\frac{-2}{c^3}
+|{\bf u}_1|^2\cdot \frac{-2c}{(1+c^2)^3}=0,\\
{\bf v}_1\bar{\bf u}_1\cdot 2c^2
+|{\bf u}_1|^2\cdot \frac{2c^2}{(1+c^2)^3}=0,\\
{\bf v}_1{\bf u}_1\cdot\frac{1}{c^2}
+{\bf v}_1\bar{\bf u}_1+|{\bf v}_1|^2=0,\\
\bar{\bf v}_1{\bf u}_1+{\bf v}_1{\bf u}_1\cdot\frac{1}{c^2}
+|{\bf u}_1|^2\cdot \frac{1}{(1+c^2)^2}=0,\\
\bar{\bf v}_1\bar{\bf u}_1+{\bf v}_1\bar{\bf u}_1\cdot c^2
+|{\bf u}_1|^2\cdot \frac{c^2}{(1+c^2)^2}=0.
\end{eqnarray*}
Without loss of generality, let ${\bf v}_1=
(1,\mathrm{i},0,0)^t$ and $|{\bf v}_1|^2=2$. Then the
equations above must have solution
\[
|{\bf v}_1|^2=2,~|{\bf u}_1|^2=2(1+c^2)^2,~
{\bf v}_1{\bf u}_1=\frac{-2c^4}{1+c^2}~,~
{\bf v}_1\bar{\bf u}_1=\frac{-2}{1+c^2}~.
\]
Let ${\bf u}_1=(\alpha_1+\mathrm{i}\beta_1,\cdots,
\alpha_4+\mathrm{i}\beta_4)^t$ where $\alpha_j,\beta_j\in\mathbb{R}$. Then the equations above imply
\begin{equation}\label{eq-isotropic0}
\beta_1=\alpha_2=0,~~\alpha_1=\frac{-(1+c^4)}{1+c^2}~,~
\beta_2=\frac{c^4-1}{1+c^2}=c^2-1.
\end{equation}
Finally, the real and imaginary part of ${\bf u}_1$ must be
orthogonal to each other with the same length $1+c^2$, i.e.,
\begin{equation}\label{eq-isotropic1}
(\alpha_3)^2-(\alpha_4)^2=(1+c^2)^2-(\alpha_1)^2,~~
(\beta_3)^2-(\beta_4)^2=(1+c^2)^2-(\beta_2)^2,
\end{equation}
and
\begin{equation}\label{eq-isotropic2}
\alpha_3\beta_3=\alpha_4\beta_4.
\end{equation}
From \eqref{eq-isotropic0}, $|\alpha_1|,|\beta_2|<1+c^2$, so \eqref{eq-isotropic1} implies $|\alpha_3|>|\alpha_4|,|\beta_3|>|\beta_4|$.
But in this situation \eqref{eq-isotropic2} will never be true.
This contradiction finishes our proof.
\end{proof}

To treat examples with good singular ends, we need the following lemma which gives a normalized Laurent expansion of ${\bf x}_z$.

\begin{lemma}\label{lem-singular}
On a complete, algebraic stationary surface ${\bf x}:M\to \mathbb{R}^4_1$,
suppose there is a local complex coordinate $z$
with a good singular end at $z=0$ which has $\tilde{d}=1, \mathrm{ind}=m\ge 1$, and the flux vector is zero
(i.e., it has even no imaginary period at this end).
Then the vector-valued meromorphic function ${\bf x}_z$ has a Laurent series around $z=0$ as below:
\begin{equation}\label{eq-singular}
{\bf x}_z=\left(\sum_{k=1}^m \frac{\alpha_{k+2}}{z^{k+2}}\right)
{\bf v}_0 +\frac{1}{z^2}{\bf v}_1 + O(1),
\end{equation}
where $\alpha_{m+2}\ne 0$,
${\bf v}_0\in \mathbb{R}^4_1$ is a non-zero lightlike vector,
${\bf v}_1\in \mathbb{C}^4_1$ is an isotropic vector,
and ${\bf v}_0,\mathrm{Re}({\bf v}_1),\mathrm{Im}({\bf v}_1)$
span a degenerate 3-dimensional subspace.
\end{lemma}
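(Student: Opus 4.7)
The plan is to substitute the Weierstrass representation \eqref{x} directly into the Laurent expansion at $z=0$ and read off the claimed structure from the orders of $\phi$, $\psi$, and $\mathrm{d}h$.

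First I would apply a Lorentz transformation (Remark~\ref{rem-trans}) to normalize $\phi(0)=\bar\psi(0)=0$; this is harmless because all three conclusions of the lemma are Lorentz-invariant and the transformation acts linearly on ${\bf x}_z$, hence preserves the vanishing-flux hypothesis. By the sign convention of Definition~\ref{lemma-index2}, $\mathrm{ind}=m>0$ forces $\phi$ to vanish to order exactly $m$ and $\psi$ to vanish to some order $n>m$; write $\phi=\phi_m z^m+O(z^{m+1})$ and $\psi=\psi_n z^n+O(z^{n+1})$ with $\phi_m,\psi_n\ne 0$. The hypothesis $\tilde d=1$ gives $d+1=m+2$ as the pole order of ${\bf x}_z$, which by inspection of the four components of \eqref{x} can only come from $h_z$ in the last two coordinates; hence $\mathrm{d}h=h_z\,\mathrm{d}z$ has a pole of order exactly $m+2$ at $0$.

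Next I would tabulate the orders of the four scalar products appearing in \eqref{x}: $\phi h_z$ has pole order $2$; $\psi h_z$ has pole order $m+2-n\le 1$; and $\phi\psi h_z$ has order $n-2\ge 0$, so $\phi\psi h_z$ is holomorphic at $0$. Consequently, for each $k\in\{3,\ldots,m+2\}$ the $z^{-k}$ coefficient of ${\bf x}_z$ is supported entirely in the last two coordinates and is a scalar multiple of $(0,0,1,1)^t$; this vector, which I call ${\bf v}_0$, is nonzero and lightlike ($0+0+1-1=0$), and the coefficient of $z^{-(m+2)}$ is nonzero since $h_z$ has pole of order exactly $m+2$. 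The vanishing-flux hypothesis together with the closing (real-period) condition forces the full residue of ${\bf x}_z$ at $0$ to vanish, killing the $z^{-1}$ term.

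The $z^{-2}$ coefficient ${\bf v}_1$ then has the form $(A,-\mathrm{i}A,B,B)^t$, where $A=\phi_m\cdot(\text{leading coefficient of }h_z)\ne 0$ comes from $\phi h_z$ in the first two coordinates and $B$ from $h_z$ in the last two; isotropy is immediate from $A^2+(-\mathrm{i}A)^2+B^2-B^2=0$. For the $3$-plane, the projections of $\mathrm{Re}({\bf v}_1)$ and $\mathrm{Im}({\bf v}_1)$ onto the first two coordinates are $\mathbb{R}$-linearly independent (since $A\ne 0$), while ${\bf v}_0$ is supported on the last two, so $\{{\bf v}_0,\mathrm{Re}({\bf v}_1),\mathrm{Im}({\bf v}_1)\}$ is linearly independent; and the equality of the last two coordinates of ${\bf v}_1$ gives $\langle{\bf v}_0,\mathrm{Re}({\bf v}_1)\rangle=\langle{\bf v}_0,\mathrm{Im}({\bf v}_1)\rangle=0$, which with $\langle{\bf v}_0,{\bf v}_0\rangle=0$ places ${\bf v}_0$ in the radical of the restricted Lorentz form.

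Once the normalization is in place, the whole argument reduces to order-counting in the four products of \eqref{x}; the only piece of bookkeeping that needs attention is confirming $n\ge 2$ so that $\phi\psi h_z$ is holomorphic at $0$, which is automatic from $n>m\ge 1$. Without this the last two coordinates of ${\bf x}_z$ would not share the same principal part and ${\bf v}_0$ could not be aligned with $(0,0,1,1)^t$.
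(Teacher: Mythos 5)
Your proof is correct and follows essentially the same route as the paper: normalize $\phi(0)=\psi(0)=0$, deduce from $\tilde d=1$ and $\mathrm{ind}=m$ that $\mathrm{d}h$ has a pole of exact order $m+2$ while $\phi\mathrm{d}h$, $\psi\mathrm{d}h$, $\phi\psi\mathrm{d}h$ have pole orders $2$, $\le 1$, $\le 0$, and then read off the Laurent structure of ${\bf x}_z$ from \eqref{x}, using the zero-flux hypothesis together with the real-period condition to kill the residue. The only (harmless, and if anything slightly cleaner) difference is that you keep the given coordinate and track the full principal part of $h_z$, whereas the paper first chooses a coordinate making $\mathrm{d}h=\mathrm{d}z/z^{m+2}$ and implicitly appeals to invariance of the stated structure under coordinate change.
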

\begin{proof}
Without loss of generality we may suppose the Gauss maps
$\phi(0)=\psi(0)=0$.
The end $z=0$ has multiplicity $d=m+\tilde{d}=m+1$.
As a consequence, ${\bf x}_z \mathrm{d}z$ must have a pole of order $m+2$, which is the same as $\mathrm{d}h$.
One need only to verify the conclusion with respect to a
specific coordinate $z$, which we choose to make
\[
\mathrm{d}h=\frac{\mathrm{d}z}{z^{m+2}},~~
\phi(z)=a_m z^m +a_{m+1} z^{m+1} +o(|z|^{m+1}),~~
\phi(z)=b_{m+1} z^{m+1} +o(|z|^{m+1}).
\]
Then by the Weierstrass representation formula we have
\begin{eqnarray*}
{\bf x}_z \mathrm{d}z &=& (\phi+\psi, -\mathrm{i}(\phi-\psi),1-\phi\psi,1+\phi\psi)\mathrm{d}h\\
&=&
\frac{\mathrm{d}z}{z^{m+2}}
\begin{pmatrix} 0\\ 0\\ 1\\ 1\end{pmatrix}
+\frac{\mathrm{d}z}{z^2}
\begin{pmatrix} a_m\\ -\mathrm{i} a_m\\ 0\\ 0\end{pmatrix}
+\frac{\mathrm{d}z}{z}
\begin{pmatrix} a_{m+1}+b_{m+1}\\ -\mathrm{i} (a_{m+1}-b_{m+1})\\ 0\\ 0\end{pmatrix}+O(1).
\end{eqnarray*}
As we assumed, the coefficient vector of the term $\frac{\mathrm{d}z}{z}$ must vanish, and the conclusion follows
immediately.
\end{proof}

After these preparation, we deal with the second case.

\begin{proposition}
There exists no complete, immersed stationary surface ${\bf x}:M\to \mathbb{R}^4_1$ with total curvature $-\int K\mathrm{d}M=6\pi$ and two good singular ends.
\end{proposition}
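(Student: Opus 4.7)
I argue by contradiction, assuming such a surface exists. Write the oriented double cover as $\widetilde M=(\mathbb{C}\cup\{\infty\})\setminus\{0,\infty,c,-1/c\}$ with involution $I:z\mapsto -1/\bar z$ and $c\in\mathbb{R}\setminus\{0\}$, so the four ends split into two $I$-orbits $\{0,\infty\}$ and $\{c,-1/c\}$. Theorem~\ref{GB2} applied to $\widetilde M$ (with $-\int K\,\mathrm{d}M=12\pi$) forces $\widetilde d_j=1$ at each end; the $I$-symmetry makes the two ends in each orbit share a common absolute index, call them $m_1,m_2\ge 1$. Lemma~\ref{lem-flux} makes every residue of ${\bf x}_z\,\mathrm{d}z$ vanish, and Lemma~\ref{lem-singular} supplies the local Laurent expansions
\[
{\bf x}_z=\Bigl(\sum_{k=1}^{m_1}\tfrac{\alpha_{k+2}}{z^{k+2}}\Bigr){\bf v}_0+\tfrac{{\bf v}_1}{z^2}+O(1)\quad\text{near }z=0,
\]
\[
{\bf x}_z=\Bigl(\sum_{k=1}^{m_2}\tfrac{\beta_{k+2}}{(z-c)^{k+2}}\Bigr){\bf u}_0+\tfrac{{\bf u}_1}{(z-c)^2}+O(1)\quad\text{near }z=c,
\]
with ${\bf v}_0,{\bf u}_0\in\mathbb{R}^4_1$ nonzero null, ${\bf v}_1,{\bf u}_1\in\mathbb{C}^4_1$ isotropic, and the degeneracy of Lemma~\ref{lem-singular} forcing ${\bf v}_0\cdot{\bf v}_1={\bf u}_0\cdot{\bf u}_1=0$ (because $\mathbb{R}^4_1$ admits no $2$-dimensional totally null subspace, so the only way the $3$-plane $\mathrm{span}({\bf v}_0,\mathrm{Re}{\bf v}_1,\mathrm{Im}{\bf v}_1)$ can be degenerate is through ${\bf v}_0\cdot{\bf v}_1=0$). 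The identity $I^*({\bf x}_z\,\mathrm{d}z)=\overline{{\bf x}_z\,\mathrm{d}z}$ matches the principal parts at $\infty$ and $-1/c$ to those at $0$ and $c$: at $\infty$, ${\bf x}_z$ has a polynomial part of degree $m_1$ along ${\bf v}_0$ (coefficients $(-1)^k\overline{\alpha_{k+2}}$) plus the finite value $\bar{\bf v}_1$; analogously at $-1/c$. Summing these four principal parts gives ${\bf x}_z$ as an explicit rational function of $z$ depending only on $(c,\alpha_j,\beta_j,{\bf v}_0,{\bf u}_0,{\bf v}_1,{\bf u}_1)$.

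I then impose the conformality ${\bf x}_z\cdot{\bf x}_z\equiv 0$. Because ${\bf v}_0\cdot{\bf v}_0={\bf v}_1\cdot{\bf v}_1={\bf v}_0\cdot{\bf v}_1=0$ (and similarly at the other pair), the self-interaction of each principal package vanishes identically, so ${\bf x}_z\cdot{\bf x}_z$ reduces to the cross-interactions between the two pairs. Partial-fractioning these at the four poles yields a graded linear/bilinear system in the Lorentz inner products $\{{\bf v}_0\cdot{\bf u}_0,\ {\bf v}_0\cdot{\bf u}_1,\ {\bf v}_0\cdot\bar{\bf u}_1,\ {\bf v}_1\cdot{\bf u}_0,\ {\bf v}_1\cdot{\bf u}_1,\ {\bf v}_1\cdot\bar{\bf u}_1,\ |{\bf v}_1|^2,\ |{\bf u}_1|^2\}$ with coefficients polynomial in $(c,\alpha_j,\beta_j)$.

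Normalize by a Lorentz transformation so ${\bf v}_0=(1,0,0,1)^t$, which forces ${\bf v}_1\in\mathbb{C}\,{\bf v}_0+\mathbb{C}(0,1,\mathrm{i},0)^t$, and then split into two cases. In case~(i), where ${\bf u}_0\parallel{\bf v}_0$, one has ${\bf v}_0\cdot{\bf u}_0={\bf v}_0\cdot{\bf u}_1=0$, every contribution of the $\alpha_j,\beta_j$ drops out, and the residual system in $({\bf v}_1,{\bf u}_1)$ reproduces \eqref{eq-isotropic0}--\eqref{eq-isotropic2} of the two-regular-ends proposition, which has no solution. In case~(ii), where ${\bf u}_0$ is transverse to ${\bf v}_0$, two independent null vectors in $\mathbb{R}^4_1$ must pair nontrivially, so ${\bf v}_0\cdot{\bf u}_0\ne 0$, and a further gauge transformation fixes ${\bf u}_0$. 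The top equation of the cascade at $z=0$ (coefficient of $z^{-(m_1+2)}$) is $\alpha_{m_1+2}\,{\bf v}_0\cdot B(0)=0$ where $B$ is the remainder of ${\bf x}_z$ regular at $z=0$; since $\alpha_{m_1+2}\ne 0$ this pins a linear combination of $\{{\bf v}_0\cdot{\bf u}_0,\ {\bf v}_0\cdot{\bf u}_1,\ {\bf v}_0\cdot\bar{\bf u}_1\}$ weighted by the $\beta_j$. Descending the cascade at $z=0$ (coefficients of $z^{-(m_1+1)},\ldots,z^{-1}$) and combining with the symmetric cascade at $z=c$ produces an overdetermined linear system that forces ${\bf v}_0\cdot{\bf u}_0=0$, contradicting transversality.

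The main obstacle is the bookkeeping in case~(ii). What makes it tractable is that, because all self-interactions already vanish automatically, every equation in the graded cascade is linear (not bilinear) in $(\alpha_j,\beta_j)$, giving a triangular structure solvable top-down; I expect the final contradiction in case~(ii) to mirror in spirit the regular-end argument in the previous proposition, namely that the isotropy and equal-length constraints on the ${\bf u}_1$-components cannot be satisfied simultaneously even after the extra null directions ${\bf v}_0,{\bf u}_0$ are taken into account.
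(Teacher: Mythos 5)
Your setup coincides with the paper's: the same double cover and involution, the same use of Lemma~\ref{lem-flux} and Lemma~\ref{lem-singular} to get the global decomposition of ${\bf x}_z$, and the same dichotomy on whether ${\bf u}_0$ is parallel to ${\bf v}_0$. Your case~(i) is workable (the extra constraint ${\bf u}_1\perp{\bf v}_0$ kills the system from the two-regular-ends proposition even faster), though the paper disposes of it without any computation: if ${\bf u}_0\parallel{\bf v}_0$ then ${\bf v}_0$ is orthogonal to every vector in the decomposition, so $\langle{\bf x},{\bf v}_0\rangle$ is harmonic on the compact Riemann surface, hence constant, and the surface sits in a degenerate $3$-space with total curvature $0$. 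Also note a small inconsistency: the self-interaction of each principal package does \emph{not} vanish identically, since ${\bf v}_1\cdot\bar{\bf v}_1=|{\bf v}_1|^2$ contributes a term $2|{\bf v}_1|^2/z^2$ (you in fact list $|{\bf v}_1|^2,|{\bf u}_1|^2$ among the unknowns two lines later).

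The genuine gap is case~(ii), which is the heart of the proposition and which you leave as an assertion: ``descending the cascade \dots produces an overdetermined linear system that forces ${\bf v}_0\cdot{\bf u}_0=0$'' is never derived, and ``I expect the final contradiction to mirror \dots'' is not an argument. Moreover the mechanism you predict is not the one that actually closes the proof. The decisive step in the paper is a normalization you omit: since $V^3=\{{\bf v}_0\}^\perp$ and $U^3=\{{\bf u}_0\}^\perp$ are degenerate $3$-spaces meeting in the spacelike $2$-plane $\{{\bf v}_0,{\bf u}_0\}^\perp$, the isotropic vectors ${\bf v}_1$ and ${\bf u}_1$ can be written as ${\bf v}_1\in\mathbb{C}{\bf v}_0+\mathbb{C}{\bf w}$ and ${\bf u}_1\in\mathbb{C}{\bf u}_0+\mathbb{C}{\bf w}$ with the single isotropic direction ${\bf w}=(1,\mathrm{i},0,0)^t$ of that common plane. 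After absorbing the ${\bf v}_0$- and ${\bf u}_0$-components into the scalar principal parts, every surviving inner product is either ${\bf v}_0\cdot{\bf u}_0=-2$ or ${\bf w}\cdot\bar{\bf w}=2$, and the ${\bf w}$-cross-terms in ${\bf x}_z\cdot{\bf x}_z$ have poles of order at most $2$ at $z=0$. The product of the two scalar principal parts then carries a pole of order $m+2\ge 3$ at $z=0$ multiplied by ${\bf v}_0\cdot{\bf u}_0\ne 0$, which nothing can cancel: conformality fails by pole-order counting, with no linear system to solve at all. Without this reduction your ``graded cascade'' involves all eight inner products with coefficients depending on the unknown $\alpha_j,\beta_j$ and $c$, and you give no reason why that system should be inconsistent, let alone why it should force ${\bf v}_0\cdot{\bf u}_0=0$. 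As written, the non-parallel case --- the only case that requires real work --- is unproved.
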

\begin{proof}
As before we consider the orientable double covering
$\widetilde{M}=\mathbb{C}\backslash\{0,c,-1/c\}$ with involution $I:z\to -1/\bar{z}$ where each end has $\tilde{d}=1$.
Here $z$ is the global coordinate on $\mathbb{C}$.

Without loss of generality, assume that at the pair of ends $z=0,\infty$ we have $\phi(0)=\psi(0)=0$, and $z=0$ has index $\mathrm{ind}=m\ge 1$. $z=c,-1/c$ is another pair of
singular ends with $\mathrm{ind}=n\ge 1$.
As a vector-valued rational function,
by Lemma~\ref{lem-flux} and Lemma~\ref{lem-singular}
${\bf x}_z$ has a decomposition as below:
\begin{eqnarray}
{\bf x}_z&=&\sum_{k=1}^m \left(\frac{\alpha_{k+2}}{z^{k+2}}
+(-z)^k\bar\alpha_{k+2}\right){\bf v}_0
+\frac{1}{z^2}{\bf v}_1 + \bar{\bf v}_1\notag \\
&&\!\!+\sum_{j=1}^n \left(\frac{\beta_{j+2}}{(z-c)^{j+2}}
+\frac{\bar\beta_{j+2}(-z)^j}{(cz+1)^{j+2}}\right){\bf u}_0
+\frac{1}{(z-c)^2}{\bf u}_1 + \frac{1}{(cz+1)^2}\bar{\bf u}_1.
\label{eq-decom1}
\end{eqnarray}
where we have used the condition
$\overline{{\bf x}_z \mathrm{d}z}=I^*({\bf x}_z \mathrm{d}z)$.
Notice that the 3-space
\[
V^3=\mathrm{Span}\{{\bf v}_0,\mathrm{Re}({\bf v}_1),\mathrm{Im}({\bf v}_1)\}=\{{\bf v}_0\}^\bot
\]
and
\[
U^3=\mathrm{Span}\{{\bf u}_0,\mathrm{Re}({\bf u}_1),\mathrm{Im}({\bf u}_1)\}=\{{\bf u}_0\}^\bot.
\]
are both degenerate 3-spaces isometric to $\mathbb{R}^3_0$.

We claim ${\bf v}_0$ is not parallel to ${\bf u}_0$.
Otherwise, ${\bf v}_0$ will be orthogonal to all of
${\bf v}_0,{\bf v}_1,\bar{\bf v}_1,{\bf u}_1,\bar{\bf u}_1$,
and $<{\bf x},{\bf v}_0>$ is a harmonic function defined on
the whole compact Riemann surface. This implies $<{\bf x},{\bf v}_0>$ is a constant; in other words, our surface is located in
an affine $3$-space orthogonal to ${\bf v}_0$. Such a surface
has flat metric and total curvature $0$, which is a contradiction.

For this reason, we may suppose
\[
{\bf v}_0=\begin{pmatrix} 0\\ 0\\ 1\\ 1\end{pmatrix},~
{\bf u}_0=\begin{pmatrix} 0\\ 0\\ -1\\ 1\end{pmatrix},~~~\Rightarrow~~
V^3\cap U^3=\{{\bf v}_0,{\bf u}_0\}^\bot
=\Big\{\begin{pmatrix} a\\ b\\ 0\\ 0\end{pmatrix}|~\forall~ a,b\in \mathbb{R}\Big\}~.
\]
Without loss of generality we suppose ${\bf v}_1$
is a linear combination of ${\bf v}_0$ and ${\bf w}=(1,\mathrm{i},0,0)^t$, and ${\bf u}_1$
a linear combination of ${\bf u}_0$ and ${\bf w}$.
This allows us to rewrite
\begin{eqnarray*}
{\bf x}_z&=&\sum_{k=0}^m \left(\frac{\alpha_{k+2}}{z^{k+2}}
+(-z)^k\bar\alpha_{k+2}\right){\bf v}_0
+\frac{\gamma}{z^2}{\bf w} + \bar\gamma\bar{\bf w}\\
&&\!\!+\sum_{j=0}^n \left(\frac{\beta_{j+2}}{(z-c)^{j+2}}
+\frac{\bar\beta_{j+2}(-z)^j}{(cz+1)^{j+2}}\right){\bf u}_0
+\frac{\delta}{(z-c)^2}{\bf w} + \frac{\bar\delta}{(cz+1)^2}
\bar{\bf w}.
\end{eqnarray*}
The conformal condition implies
\begin{eqnarray*}
0=\frac{1}{2}{\bf x}_z\cdot {\bf x}_z&=&
\sum_{k=0}^m \left(\frac{\alpha_{k+2}}{z^{k+2}}
+(-z)^k\bar\alpha_{k+2}\right)\cdot
\sum_{j=0}^n \left(\frac{\beta_{j+2}}{(z-c)^{j+2}}
+\frac{\bar\beta_{j+2}(-z)^j}{(cz+1)^{j+2}}\right)
{\bf v}_0{\bf u}_0\\
&&+\frac{\gamma \bar\delta}{z^2(cz+1)^2}|{\bf w}|^2
+\frac{\delta \bar\gamma}{(z-c)^2}|{\bf w}|^2.
\end{eqnarray*}
Since $\alpha_{m+2}\ne 0$,${\bf v}_0{\bf u}_0=-2$,
the first term has a pole of order $m+2\ge 3$ at $z=0$,
which could not be canceled by the second and the third term.
This contradiction finishes the proof.
\end{proof}
\begin{theorem}
There exists no complete, immersed stationary surface ${\bf x}:M\to \mathbb{R}^4_1$ with total curvature $-\int K\mathrm{d}M=6\pi$ and two ends.
\end{theorem}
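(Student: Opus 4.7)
The statement combines the three subcases determined by the types of the two ends on $M$. Under the orientable double covering $\widetilde M=\mathbb{C}\setminus\{0,c,-1/c\}$ with $I(z)=-1/\bar z$, the symmetries $\phi\circ I=\bar\psi,\ \psi\circ I=\bar\phi$ force the two branches of an end pair to share the same type, so on $M$ there are three possibilities---both ends regular, both ends good singular, or mixed. The first two have already been eliminated by the preceding propositions, so the plan is to dispose of the remaining mixed case by an analogous partial-fraction argument.

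Assume without loss of generality that $\{0,\infty\}$ is the regular end and $\{c,-1/c\}$ is the good singular end of index $n\ge 1$. Combining the regular-end decomposition used in the first proposition of this section with the Laurent form of Lemma~\ref{lem-singular}, and annihilating residues by Lemma~\ref{lem-flux}, $\mathbf x_z$ takes the form
\begin{equation*}
\mathbf x_z=\frac{\mathbf v_1}{z^2}+\bar{\mathbf v}_1+U_0(z)\mathbf u_0+\frac{\mathbf u_1}{(z-c)^2}+\frac{\bar{\mathbf u}_1}{(cz+1)^2},
\end{equation*}
\begin{equation*}
U_0(z)=\sum_{j=0}^{n}\Bigl(\frac{\beta_{j+2}}{(z-c)^{j+2}}+\frac{\bar\beta_{j+2}(-z)^j}{(cz+1)^{j+2}}\Bigr),
\end{equation*}
with $\beta_{n+2}\ne 0$, $\mathbf u_0$ a nonzero lightlike vector, $\mathbf u_1$ isotropic with $\mathbf u_0\cdot\mathbf u_1=0$, and $\mathbf v_1$ isotropic with real and imaginary parts spacelike, orthogonal, of equal positive length.

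Taking the inner product of $\mathbf x_z$ with $\mathbf u_0$ kills the entire $\mathbf u_0$-series (since $\mathbf u_0\cdot\mathbf u_0=0$) together with the $\mathbf u_1$-term, leaving the rational scalar function
\[
f(z):=\mathbf x_z\cdot\mathbf u_0=\frac{A}{z^2}+\bar A+\frac{B}{(cz+1)^2},\qquad A:=\mathbf u_0\cdot\mathbf v_1,\ B:=\mathbf u_0\cdot\bar{\mathbf u}_1.
\]
Writing $\mathbf U_1:=\mathbf x_z-U_0\mathbf u_0$, the conformal identity $\mathbf x_z\cdot\mathbf x_z=0$ becomes $2U_0 f+\mathbf U_1\cdot\mathbf U_1\equiv 0$. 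Because $\mathbf v_1,\mathbf u_1,\bar{\mathbf u}_1$ are all isotropic, $\mathbf U_1\cdot\mathbf U_1$ has poles of order at most $2$ at each of $z=0,c,-1/c$, whereas $2U_0 f$ a priori has a pole of order $n+2$ at $z=c$ and of order $n+4$ at $z=-1/c$. Matching the high-order polar coefficients at $z=c$ and $z=-1/c$ gives a system of $2n+2$ real constraints on the $4$ real parameters $(\mathrm{Re}\,A,\mathrm{Im}\,A,\mathrm{Re}\,B,\mathrm{Im}\,B)$ of $f$, which overdetermines $f\equiv 0$, i.e. $A=B=0$. Then $\mathbf x_z\cdot\mathbf u_0\equiv 0$, so $\langle\mathbf x,\mathbf u_0\rangle$ is constant, $\mathbf x(M)$ lies in the degenerate hyperplane $\mathbf u_0^\perp$, the induced metric is flat, and the total curvature vanishes---contradicting $-\int K\,dM=6\pi$.

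The delicate step is the borderline case $n=1$, where the naive count produces just $4$ real equations in the $4$ real unknowns. One must then verify that the resulting $4\times 4$ coefficient matrix is nonsingular for every admissible parameter $c\in\mathbb R\setminus\{0\}$ (including symmetric values such as $c=\pm 1$), which is the analogue of the Vi\`eta-style elimination used in the two preceding propositions and forms the main computational hurdle.
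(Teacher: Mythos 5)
Your reduction to the mixed case and the partial--fraction setup are sound, and the first half of your argument (forcing $f(c)=f(-1/c)=0$ from the leading polar coefficients, which pins down $c=\pm 1$ and index $1$ unless $f\equiv 0$) runs parallel to the paper's two assertions $c=\pm 1$ and $m=1$. But the step you flag as ``the main computational hurdle'' is not a hurdle to be overcome --- it is where the argument fails. The claim that the polar--coefficient system overdetermines $f\equiv 0$ is false in exactly the borderline case $c=\pm 1$, $n=1$: there the conditions reduce to $\mathrm{Re}(A)=0$ with $A=\mathbf u_0\cdot\mathbf v_1$ otherwise free, and a nonzero purely imaginary $A$ is admissible. (Two side remarks: since $\mathbf u_1\perp\mathbf u_0$ and $\mathbf u_0$ is real, $B=\mathbf u_0\cdot\bar{\mathbf u}_1=0$ automatically, so $f$ carries only two real parameters, not four; and the pole order of $U_0$ at $z=-1/c$ is $n+2$, not $n+4$.) No verification of a $4\times 4$ nonsingularity can succeed, because the paper exhibits an explicit candidate $\mathbf x_z$ in this case --- see \eqref{eq-decom4} --- which satisfies \emph{every} algebraic identity coming from conformality, isotropy, the period conditions, and the involution symmetry, with the analogue of your $A$ equal to $\tfrac{\sqrt{3}}{2}\mathrm{i}\neq 0$.

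The missing idea is that in the surviving case the contradiction cannot come from the partial--fraction algebra at all; it comes from the regularity condition $\phi\neq\bar\psi$ on the interior of $\widetilde M$. The paper extracts the explicit Gauss maps \eqref{eq-gaussmap} from the forced decomposition and then shows, via Vi\`eta's formulas applied to the quartic $\phi(z)=1/t$, that for a suitable $t$ this quartic has a pair of roots of the form $z_0,-1/\bar z_0$, i.e.\ the candidate surface necessarily has an interior point where $\phi=\bar\psi$ and so is not a regular stationary immersion. Your proposal never engages with this condition, so as written it cannot close the case that actually matters.
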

\begin{proof}
According to the previous discussion, now we need only to rule
out the final possibility of one regular end and one good singular end. Suppose there is such an example,
which lifts to a stationary immersion ${\bf x}:\widetilde{M}\to \mathbb{R}^4_1$ with the orientable double covering
\[
\widetilde{M}=\mathbb{C}\backslash\{0,c,-1/c\}, ~~I:z\to -1/\bar{z}.
\]
Suppose $z=0$ is a singular end with $\mathrm{ind}_0=m\ge 1$.
Let $z=c,-1/c$ be the regular ends with $c\ne 0, c\in \mathbb{R}$. Similar to \eqref{eq-decom1},
we have the following decomposition
\begin{equation}\label{eq-decom2}
{\bf x}_z=\sum_{k=1}^m \left(\frac{\alpha_{k+2}}{z^{k+2}}
+(-z)^k\bar\alpha_{k+2}\right){\bf v}_0
+\frac{1}{z^2}{\bf v}_1 + \bar{\bf v}_1
+\frac{1}{(z-c)^2}{\bf u}_1 + \frac{1}{(cz+1)^2}\bar{\bf u}_1
\end{equation}
where ${\bf v}_0$ is real and lightlike, and
${\bf v}_1,{\bf u}_1$ are complex isotropic vectors.

We assert: 1) $c=\pm 1$; 2) $m=1$.

Using the conformal condition
${\bf x}_z\cdot {\bf x}_z=0$ and comparing the coefficients of
terms involving $1/z^{m+2}$, we get
\[
{\bf v}_0{\bf u}_1+\frac{1}{c^2}{\bf v}_0\bar{\bf u}_1=0.
\]
If $c\ne \pm 1$, then the equation above together with
its complex conjugation yields
${\bf v}_0{\bf u}_1={\bf v}_0\bar{\bf u}_1=0$.
Thus ${\bf x}_z\mathrm{d}z \cdot {\bf v}_0\equiv 0$.
By maximal principle for harmonic functions,
${\bf x}$ maps the surface into $3$-space $\{{\bf v}_0\}^\bot$
(up to a translation). This is impossible. Thus we may take
$c=1$.

To verify the second assertion, suppose on the contrary we have
$m\ge 2$. Then the terms like $1/z^2(z\pm 1)^2,1/z^2(z\pm 1)$
will not influence the coefficients of $1/z^{m+2}$ and $1/z^{m+1}$
when computing ${\bf x}_z\cdot {\bf x}_z=0$. Because
\[
\frac{1}{z^{m+2}(z\mp 1)^2}=\frac{1}{z^{m+2}}\pm \frac{2}{z^{m+1}}+\cdots,~~
\frac{1}{z^{m+1}(z\mp 1)^2}=\frac{1}{z^{m+1}}+\cdots,
\]
the vanishing of the term $1/z^{m+2}$ implies
${\bf v}_0{\bf u}_1+{\bf v}_0\bar{\bf u}_1=0$,
and the vanishing of the term $1/z^{m+2}$ implies
${\bf v}_0{\bf u}_1-{\bf v}_0\bar{\bf u}_1=0$.
Again this induces ${\bf v}_0{\bf u}_1=0$ and
${\bf x}_z\mathrm{d}z \cdot {\bf v}_0\equiv 0$, which is not allowed. This shows $m=1$.

Without loss of generality we may suppose the non-zero
coefficient $a_{m+2}=a_3$ is unit, and up to a rotation
$z\to \mathrm{e}^{\mathrm{i}\theta}$ we make it to be $1$
(note that this change of coordinate will not influence anything,
including the explicit form of $I(z)=-1/\bar{z}$).
Now we can rewrite \eqref{eq-decom2} as
\begin{equation}\label{eq-decom3}
{\bf x}_z= \left(\frac{1}{z^3}-z\right){\bf v}_0
+\frac{1}{z^2}{\bf v}_1 + \bar{\bf v}_1
+\frac{1}{(z-1)^2}{\bf u}_1 + \frac{1}{(z+1)^2}\bar{\bf u}_1.
\end{equation}
The conformal condition reads
\begin{eqnarray}
0=\frac{1}{2}{\bf x}_z\cdot {\bf x}_z&=&
\left(\frac{1}{z^3}-z\right)\frac{{\bf v}_0{\bf u}_1}{(z-1)^2}
+\left(\frac{1}{z^3}-z\right)\frac{{\bf v}_0\bar{\bf u}_1}{(z+1)^2}
+\frac{{\bf v}_1\bar{\bf v}_1}{z^2}
+\frac{{\bf u}_1\bar{\bf u}_1}{(z^2-1)^2}\notag\\
&&+\frac{{\bf v}_1{\bf u}_1}{z^2(z-1)^2}
+\frac{{\bf v}_1\bar{\bf u}_1}{z^2(z+1)^2}
+\frac{\bar{\bf v}_1{\bf u}_1}{(z-1)^2}
+\frac{\bar{\bf v}_1\bar{\bf u}_1}{(z+1)^2}. \label{eq-conf2}
\end{eqnarray}
Because
\begin{eqnarray*}
\left(\frac{1}{z^3}-z\right)\frac{1}{(z-1)^2}
&=&\frac{1}{z^3}+\frac{2}{z^2}+\frac{3}{z}-\frac{4}{z-1},\\
\left(\frac{1}{z^3}-z\right)\frac{1}{(z+1)^2}
&=&\frac{1}{z^3}-\frac{2}{z^2}+\frac{3}{z}-\frac{4}{z+1},
\end{eqnarray*}
together with \eqref{eq-frac1}, \eqref{eq-frac2}, \eqref{eq-frac3}
when $c=1$, we obtain the following equations from \eqref{eq-conf2} by comparing coefficients of the corresponding
terms:
\begin{eqnarray*}
\frac{1}{z^3}: &&{\bf v}_0{\bf u}_1+{\bf v}_0\bar{\bf u}_1=0,\\
\frac{1}{z-1}: &&-4{\bf v}_0{\bf u}_1-2{\bf v}_1{\bf u}_1
-\frac{1}{4}{\bf u}_1\bar{\bf u}_1=0,\\
\frac{1}{z+1}: &&-4{\bf v}_0\bar{\bf u}_1+2{\bf v}_1\bar{\bf u}_1
+\frac{1}{4}{\bf u}_1\bar{\bf u}_1=0,\\
\frac{1}{(z-1)^2}: &&{\bf v}_1{\bf u}_1+\bar{\bf v}_1{\bf u}_1
+\frac{1}{4}{\bf u}_1\bar{\bf u}_1=0,\\
\frac{1}{(z+1)^2}: &&{\bf v}_1\bar{\bf u}_1+\bar{\bf v}_1\bar{\bf u}_1
+\frac{1}{4}{\bf u}_1\bar{\bf u}_1=0,\\
\frac{1}{z^2}: &&2({\bf v}_0{\bf u}_1-{\bf v}_0\bar{\bf u}_1)
+{\bf v}_1{\bf u}_1+{\bf v}_1\bar{\bf u}_1
+{\bf v}_1\bar{\bf v}_1=0,\\
\frac{1}{z}: &&3({\bf v}_0{\bf u}_1+{\bf v}_0\bar{\bf u}_1)
+2({\bf v}_1{\bf u}_1-{\bf v}_1\bar{\bf u}_1)=0.
\end{eqnarray*}
As the consequence, we get
\begin{gather}
\mathrm{Re}({\bf u}_1)~\bot~{\bf v}_0,~~
\mathrm{Im}({\bf u}_1)~\bot~\mathrm{Re}({\bf v}_1),\mathrm{Im}({\bf v}_1),\notag\\
2\mathrm{Im}({\bf u}_1){\bf v}_0+\mathrm{Re}({\bf u}_1)\mathrm{Im}({\bf v}_1)=0,\label{eq-conf3}\\
\mathrm{Re}({\bf u}_1)\mathrm{Re}({\bf v}_1)=
-\frac{1}{2}|{\bf v}_1|^2=-\frac{1}{8}|{\bf u}_1|^2.\notag
\end{gather}
Without loss of generality, we can always assume
\[
{\bf v}_0=\begin{pmatrix}0\\0\\1\\1\end{pmatrix},~
{\bf v}_1=\begin{pmatrix}1\\-\mathrm{i}\\0\\0\end{pmatrix},~
{\bf u}_1=\begin{pmatrix}a_1+b_1\mathrm{i}\\a_2+b_2\mathrm{i}\\
a_3+b_3\mathrm{i}\\a_4+b_4\mathrm{i}\end{pmatrix}.
\]
Then the conformal condition \eqref{eq-conf3} implies
\[
a_3=a_4,~b_1=b_2=0,~2(b_3-b_4)-a_2=0,~a_1=-1,~|{\bf u}_1|^2=8.
\]
Together with the isotropic condition ${\bf u}_1{\bf u}_1=0$,
we have
\[
1+4(b_3-b_4)^2=4=(b_3)^2-(b_4)^2,~a_3(b_3-b_4)=0.
\]
Thus $a_3=0,~a_2=\sqrt{3}$ and
\begin{equation}\label{eq-b3b4}
b_3=\frac{4}{\sqrt{3}}+\frac{\sqrt{3}}{4},
~b_4=\frac{4}{\sqrt{3}}-\frac{\sqrt{3}}{4}.
\end{equation}
Substitute these solutions back to \eqref{eq-decom3} and one gets
\begin{equation}\label{eq-decom4}
{\bf x}_z= \left(\frac{1}{z^3}-z\right)\begin{pmatrix}0\\0\\1\\1\end{pmatrix}
+\frac{1}{z^2}\begin{pmatrix}1\\-\mathrm{i}\\0\\0\end{pmatrix}
+ \begin{pmatrix}1\\ \mathrm{i}\\0\\0\end{pmatrix}
+\frac{1}{(z-1)^2}\begin{pmatrix}-1\\ \sqrt{3}\\b_3\mathrm{i}\\
b_4\mathrm{i}\end{pmatrix}
+\frac{1}{(z+1)^2}\begin{pmatrix}-1\\ \sqrt{3}\\-b_3\mathrm{i}\\
-b_4\mathrm{i}\end{pmatrix}.
\end{equation}
Compare this with the Weierstrass representation formula, we obtain
\begin{equation}\label{eq-dh}
\mathrm{d}h=\left(\frac{1}{z^3}-z+\frac{16\mathrm{i}}{\sqrt{3}}
\frac{z}{(z^2-1)^2}\right)\mathrm{d}z,~~
\phi\psi\mathrm{d}h=\frac{-\sqrt{3}\mathrm{i}\cdot z}{(z^2-1)^2}\mathrm{d}z,
\end{equation}
\[
\phi\mathrm{d}h=\left(\frac{1}{z^2}+\frac{\omega}{(z-1)^2}
+\frac{\omega}{(z+1)^2}\right)\mathrm{d}z,~~
\psi\mathrm{d}h=\left(1+\frac{\omega^2}{(z-1)^2}
+\frac{\omega^2}{(z+1)^2}\right)\mathrm{d}z,
\]
where $\omega=\frac{-1+\sqrt{3}\mathrm{i}}{2}$ is the third
unit root. From this we deduce the Gauss maps are
\begin{equation}\label{eq-gaussmap}
\phi=\frac{\phi\psi\mathrm{d}h}{\psi\mathrm{d}h}
=\frac{-\sqrt{3}\mathrm{i}\cdot z}{(z^2-1)^2+2\omega^2(z^2+1)},~~
\psi=\frac{\phi\psi\mathrm{d}h}{\phi\mathrm{d}h}
=\frac{-\sqrt{3}\mathrm{i}\cdot z^3}{(z^2-1)^2+2\omega^2z^2(z^2+1)}.
\end{equation}
Note that $\deg\phi=\deg\psi=4$ as we desired according to
our assumptions and the index formula.

Finally we have to verify $\phi(z)\ne\bar\psi(z)$ for any
$z\ne 0$, which is the only regularity condition need to verify.
(By the expression of ${\bf x}_z$ we know the ends are
$z=0,\infty,c,-1/c$, and there is no usual branch points.)
But in the end we will see this is not true.

To solve this problem, as in Section~3, suppose
\[
\exists \frac{1}{t}\in\mathbb{C}\backslash\{0\}, s.t.~~
\phi(z)=\bar\psi(z)=\phi(\frac{-1}{\bar{z}}).
\]
Note that $\phi(z)=\frac{1}{t}$ is indeed a fourth order algebraic equation
\begin{equation}\label{eq-}
(z^2-1)^2+2\omega^2(z^2+1)-\sqrt{3}\mathrm{i}tz
=z^4-(3+\sqrt{3}\mathrm{i})z^2-\sqrt{3}\mathrm{i}tz
-\sqrt{3}\mathrm{i}=0.
\end{equation}
The existence of non-trivial singular points is now equivalent to
the existence of two roots $z_0,-1/\bar{z}_0$ (and other two $z_1,z_2$) for the polynomial above.
By Vi\`{e}ta's formula,
\begin{eqnarray}
  z_0-\frac{1}{\bar{z}_0}+z_1+z_2 &=& 0, \label{eq-vieta5} \\
  -\frac{z_0}{\bar{z}_0}+z_1 z_2 +(z_0-\frac{1}{\bar{z}_0})(z_1+z_2) &=& -(3+\sqrt{3}\mathrm{i}),\label{eq-vieta6}\\
  -\frac{z_0}{\bar{z}_0}(z_1+z_2)+z_1 z_2 (z_0-\frac{1}{\bar{z}_0}) &=& \sqrt{3}\mathrm{i}t, \label{eq-vieta7}\\
  -\frac{z_0}{\bar{z}_0}\cdot z_1 z_2 &=& -\sqrt{3}\mathrm{i}.\label{eq-vieta8}
\end{eqnarray}
Express $z_1+z_2$ and $z_1z_2$ in terms of
$z_0,-\frac{1}{\bar{z}_0}$ using
\eqref{eq-vieta5}\eqref{eq-vieta8}.
Since $z_0=r\mathrm{e}^{\mathrm{i}\theta}$,
$-\frac{z_0}{\bar{z}_0}=-\mathrm{e}^{2\mathrm{i}\theta}$,
$z_0-\frac{1}{\bar{z}_0}=(r-\frac{1}{r})\mathrm{e}^{\mathrm{i}\theta}$.
Substitute these into \eqref{eq-vieta6}. We obtain
\[
-\mathrm{e}^{2\mathrm{i}\theta}
+\sqrt{3}\mathrm{i}\mathrm{e}^{-2\mathrm{i}\theta}
-(r-\frac{1}{r})^2\mathrm{e}^{2\mathrm{i}\theta}=-(3+\sqrt{3}\mathrm{i})
\]
Comparing the real and imaginary parts separately, one finds
\begin{gather}
r^2-1+\frac{1}{r^2}-\sqrt{3}\sin 4\theta
-3\cos 2\theta-\sqrt{3}\sin 2\theta=0,\label{eq-real}\\
\sqrt{3}\sin 2\theta=\cos 4\theta+\cos 2\theta.\label{eq-imag}
\end{gather}
Let $\lambda=\cos 2\theta$. Then \eqref{eq-imag} implies
$\lambda$ must satisfy
\begin{equation}\label{eq-imag2}
\pm\sqrt{3}\sqrt{1-\lambda^2}=2\lambda^2+\lambda-1.
\end{equation}
It is easy to see that this amounts to find the intersection
points of an ellipse and a parabola. Drawing graphs of both functions shows
that there exists a solution to \eqref{eq-imag2} such that
$1/2<\lambda_0<1$. Next we insert \eqref{eq-imag} into \eqref{eq-real} and find
\begin{equation}\label{eq-real2}
r^2+\frac{1}{r^2}=2\lambda(2\lambda^2+2\lambda+1).
\end{equation}
When $\lambda=\lambda_0\in(\frac{1}{2},1)$, since the right hand side
is greater than $2$, there is positive solution $r_0$ to the equation~\eqref{eq-real2}. This shows the only possible
example as given by \eqref{eq-decom4} (or equivalently, by
\eqref{eq-dh}\eqref{eq-gaussmap}) violates the regularity condition. This completes the proof of the non-existence result.
\end{proof}

\end{document}